\def\red{\color{red}}
\def\blue{\color{blue}}
\def\rr{\mathbb{R}}
\def\rn{\mathbb{R}^n}
\def\nn{\mathbb{N}}
\def\zz{\mathbb{Z}}
\def\on{\mathbf{0}}
\def\RM{RM_{p,q,\alpha}}
\def\one{\mathbf{1}}
\def\pmq{\frac{1}{p}-\frac{1}{q}}
\def\pa{\frac{p}{1-p\alpha}}
\def\Lq{L^q}
\def\Lpa{L^\pa}
\def\onepqa{1-p\alpha-\frac{p}{q}}
\def\lz{\lambda}
\def\dz{\delta}
\def\ez{\epsilon}
\def\bz{\beta}
\def\gz{{\gamma}}
\def\tz{\theta}
\def\ls{\lesssim}
\def\fz{\infty}
\def\az{\alpha}
\def\cf{{\mathcal F}}
\def\cg{{\mathcal G}}
\def\cl{{\mathcal L}}
\def\cx{{\mathcal X}}
\def\r{\right}
\def\lf{\left}
\def\noz{{\nonumber}}
\def\r{\right}
\def\lf{\left}
\def\dist{{\mathop\mathrm{\,dist\,}}}
\def\loc{{\mathop\mathrm{\,loc\,}}}
\def\eqref#1{(\ref{#1})}
\newtheorem{theorem}{Theorem}[section]
\newtheorem{lemma}[theorem]{Lemma}
\newtheorem{corollary}[theorem]{Corollary}
\newtheorem{proposition}[theorem]{Proposition}
\theoremstyle{definition}
\newtheorem{remark}[theorem]{Remark}
\newtheorem{question}[theorem]{Question}
\numberwithin{equation}{section}
\begin{document}

\title{\bf\Large Nontriviality of Riesz--Morrey Spaces
\footnotetext{\hspace{-0.35cm} 2020 {\it
Mathematics Subject Classification}. Primary 42B35; Secondary 46E30, 46E35. \endgraf
{\it Key words and phrases}. Lebesgue space, Morrey space, Riesz norm,
Riesz--Morrey space.
\endgraf
This project is supported by the National
Natural Science Foundation of China  (Grant Nos.  11971058 and 12071197) and
the National Key Research and Development Program of China
(Grant No. 2020YFA0712900),
and Der-Chen Chang is supported by an NSF grant DMS-1408839 and a
McDevitt Endowment Fund at Georgetown University.}}
\date{ }
\author{Zongze Zeng, Der-Chen Chang, Jin Tao and Dachun Yang\footnote{Corresponding
		author, E-mail: \texttt{dcyang@bnu.edu.cn}/{\red May 13, 2021}/Final version.}}
\maketitle

\vspace{-0.7cm}

\begin{center}
\begin{minipage}{13cm}
{\small {\bf Abstract}\quad
In this article, the authors completely answer an open question,
presented in [Banach J. Math. Anal. 15 (2021), no. 1, 20],
via showing that the Riesz--Morrey space is truly a new space
larger than a particular Lebesgue space with critical index.
Indeed, this Lebesgue space is just the real interpolation space
of the Riesz--Morrey space for suitable indices.
Moreover, the authors further show the aforementioned inclusion
is also proper, namely, this embedding is sharp in some sense,
via constructing two nontrivial spare functions,
respectively, on $\mathbb{R}^n$ and any given cube $Q_0$
of $\mathbb{R}^n$ with finite side length.
The latter constructed function is inspired by the striking function constructed by
Dafni et al. [J. Funct. Anal. 275 (2018), 577--603].
All the proofs of these results strongly depend on some
exquisite geometrical analysis on cubes of $\mathbb{R}^n$.
As an application, the relationship between Riesz--Morrey spaces
and Lebesgue spaces is completely clarified on all indices.}
\end{minipage}
\end{center}

\vspace{0.1cm}


\vspace{0.1cm}

\section{Introduction}

Throughout the whole article, a \emph{cube $Q$} means
that it has finite side length and
all its sides parallel to the coordinate axes,
but $Q$ is not necessary to be open or closed.
Moreover, we always let $\mathcal{X}$ be $\rn$
or any given cube of $\rn$.

Recall that the \emph{Lebesgue space} $L^q(\cx)$
with $q\in[1,\fz]$ is defined to be the set of
all measurable functions $f$ on $\cx$ such that
$$\|f\|_{L^q(\mathcal{X})}:=
\begin{cases}
\displaystyle{\lf[\int_{\cx}|f(x)|^q\,dx\right]^\frac1q}
&{\rm if}\quad q\in [1,\fz),\\
\displaystyle{\mathop{\mathrm{ess\,sup}}_{x\in\cx}|f(x)|}
&{\rm if}\quad q=\fz
\end{cases}$$
is finite.
In what follows, we use $\mathbf{1}_E$ to denote
the \emph{characteristic function} of any set $E\subset\rn$,
and $L_{\loc}^q(\cx)$ to denote
the set of all measurable functions $f$ on $\cx$ such that
$f{\mathbf 1}_E\in L^q(\cx)$
for any bounded measurable set $E\subset\mathcal{X}$.
Moreover, for any $f\in L_{\loc}^1(\cx)$ and any cube $Q\subset\cx$,
let
$$f_Q:=\fint_Q f(y)\,dy:=\frac{1}{|Q|}\int_Q f(y)\,dy.$$
A surprising formula of Riesz \cite{r1910} says that,
for any given $p\in(1,\fz)$ and any given cube $Q_0$,
$f\in L^p(Q_0)$ if and only if
\begin{align}\label{riesz}
\|f\|_{R_p(Q_0)}:=\sup \lf\{ \sum_i|Q_i|\lf[
\fint_{Q_i}|f(x)|\,dx\r]^p \right\}^\frac1p<\infty,
\end{align}
where the supremum is taken over all collections of
subcubes $\{Q_i\}_i$ of $Q_0$ with pairwise disjoint interiors.
Indeed, the norm $\|\cdot\|_{ R_p(Q_0)}$ appearing in \eqref{riesz}
is nowadays called the \emph{Riesz norm}, and
$$\|\cdot\|_{R_p(Q_0)}=\|\cdot\|_{L^p(Q_0)}$$
for any given $p\in(1,\fz)$;
see also \cite[Proposition 1]{tyy21} for this identity.

To study the regularity of the solutions of
partial differential equations,
Morrey \cite{m38} introduced the \emph{Morrey space}
$$M_{q,\az}(\cx):=\lf\{f\in L^q_\loc(\cx):\,\,
\|f\|_{ M_{q,\az}(\cx)}<\fz\right\}$$
with $q\in[1,\fz)$ and $\az\in[-\frac1q,0]$,
where the \emph{Morrey norm}
\begin{align}\label{morrey}
\|f\|_{ M_{q,\az}(\cx)}:=\sup_{{\rm cube}\ Q\subset\cx}
|Q|^{-\az-\frac1q}\|f\|_{L^q(Q)}
\end{align}
with the supremum taken over all cubes of $\cx$.
Now, Morrey spaces have proved very useful function spaces
in partial differential equations and harmonic analysis.
Indeed, there exist a tremendous amount of researches
on Morrey spaces, and we refer the reader to, for instance,
the recent monographs, respectively, by Yuan et al. \cite{wsy10},
Adams \cite{a15}, and Sawano et al. \cite{sfk20i,sfk20ii}.
Also, one can find, in \cite{dn20,dr93,ly13,lwyy19,s03,tyy20},
their applications in partial differential equations
and, in \cite{ax12,hns17,hs20,ms19,mst18,tyy19},
their applications in harmonic analysis.

Very recently, via combining the Riesz norm \eqref{riesz}
and the Morrey norm \eqref{morrey},
Tao et al. \cite{tyy21} introduced the \emph{Riesz--Morrey space}
$$RM_{p,q,\alpha}(\cx):=\lf\{f\in L^q_{\loc}(\cx):\,\,
\|f\|_{RM_{p,q,\alpha}(\cx)}<\fz\right\}$$
with $p,\ q\in[1,\fz]$ and $\az\in\rr$,
where
\begin{align*}
\|f\|_{  RM_{p,q,\alpha}(\cx)}:=
\begin{cases}
\displaystyle{\sup\lf[\sum_i|Q_i|^{1-p\az-\frac pq}\|f\|_{L^q(Q_i)}^p\right]^{\frac 1p}}
&{\rm if\ }p\in[1,\fz),\ q\in[1,\fz],\\
\displaystyle{\sup_{{\rm cube\ }Q\subset\cx}|Q|^{-\alpha-\frac1q}\|f\|_{L^q(Q)}}
&{\rm if\ }p=\fz,\ q\in[1,\fz]
\end{cases}
\end{align*}
and the first supremum is taken over all collections of
subcubes $\{Q_i\}_i$ of $\cx$ with pairwise disjoint interiors.
It was shown in \cite[Theorem 1 and Corollary 1]{tyy21} that,
for a great deal of $p,\ q$, and $\az$,
the space $RM_{p,q,\alpha}(\cx)$ coincides with the (almost everywhere)
zero space or the Lebesgue space or the Morrey space.
However, there still exist three \emph{unclear} cases proposed in
\cite[Remark 3]{tyy21}, and we restate it as follows.
\begin{question}\label{open}
It will be of great interest to find some functions
which belong to the following three new ``intermediate'' space,
but not to some Lebesgue or Morrey spaces:
\begin{itemize}
\item[{\rm(i)}] $ RM_{p,q,\az}(\rn)$ for any given $p\in(1,\fz)$,
$q\in[1,p)$, and $\az\in(\frac1p-\frac1q,0)$;
	
\item[{\rm(ii)}] $ RM_{p,q,\az}(Q_0)$ for any given $p\in[1,\fz)$,
$q\in[1,p]$, and $\az\in(-\frac1q,0)$;
	
\item[{\rm(iii)}] $RM_{p,q,\az}(Q_0)$ for any given $p\in[1,\fz)$,
$q\in(p,\fz]$, and $\az\in(0,\frac1p-\frac1q)$.
\end{itemize}
(Indeed, this question was asked by the referee of \cite{tyy21}.)
\end{question}

In this article, we completely answer this open question
via showing that the Riesz--Morrey space is truly a new space
larger than a particular Lebesgue space with critical index.
Indeed, this Lebesgue space is just the real interpolation space
of the Riesz--Morrey space for suitable indices.
Moreover, we further show the aforementioned inclusion is also proper,
namely, this embedding is sharp in some sense,
via constructing two nontrivial spare functions,
respectively, on $\mathbb{R}^n$ and any given cube $Q_0$
of $\mathbb{R}^n$ with finite side length.
The latter constructed function is inspired by the striking function constructed by
Dafni et al. \cite{dhky18}. All the proofs of these results strongly depend on some
exquisite geometrical analysis on cubes of $\mathbb{R}^n$.
As an application, the relationship between Riesz--Morrey spaces
and Lebesgue spaces is completely clarified on all indices.

To be precise, we first show that (ii) and (iii) of Question \ref{open}
are partially trivial in the following proposition.

\begin{theorem}\label{thm-Q23}
Let $Q_0$ be any cube of $\rn$.
\begin{itemize}
\item[{\rm(i)}]
If $p\in[1,\fz)$, $q\in[1,p]$, and $\az\in(-\frac1q,\frac1p-\frac1q]$,
then $RM_{p,q,\az}(Q_0)=L^q(Q_0)$ and
$$\|\cdot\|_{RM_{p,q,\az}(Q_0)}
=|Q_0|^{\frac1p-\frac1q-\az}\|\cdot\|_{L^q(Q_0)}.$$

\item[{\rm(ii)}]
If $p\in[1,\fz)$, $q\in(p,\fz]$, and $\az\in(0,\frac1p-\frac1q)$,
then $RM_{p,q,\az}(Q_0)=\{{0}\}$.
\end{itemize}
\end{theorem}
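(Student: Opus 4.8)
The plan is to prove (i) as the precise norm identity $\|\cdot\|_{\RM(Q_0)}=|Q_0|^{\pmq-\az}\|\cdot\|_{L^q(Q_0)}$, from which the set equality $\RM(Q_0)=L^q(Q_0)$ is immediate, and to prove (ii) by testing the Riesz--Morrey ``norm'' against arbitrarily fine equal subdivisions of $Q_0$, which forces any admissible function to vanish.

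\smallskip

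\noindent\textbf{Part (i).} I would first set $\beta:=\onepqa=p(\pmq-\az)$, so that the two hypotheses $-\frac1q<\az\le\pmq$ become exactly $\beta\in[0,1)$, and note $|Q_0|^{\pmq-\az}=|Q_0|^{\beta/p}$. It then suffices to show, for every $f\in L^q_\loc(Q_0)$,
\begin{align*}
\sup_{\{Q_i\}_i}\,\sum_i|Q_i|^{\beta}\|f\|_{L^q(Q_i)}^p=|Q_0|^{\beta}\|f\|_{L^q(Q_0)}^p
\end{align*}
with the supremum over all families $\{Q_i\}_i$ of subcubes of $Q_0$ having pairwise disjoint interiors, since this gives $\|f\|_{\RM(Q_0)}^p=|Q_0|^{\beta}\|f\|_{L^q(Q_0)}^p$ and hence $\|f\|_{\RM(Q_0)}<\fz\iff f\in L^q(Q_0)$. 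The inequality ``$\ge$'' is obtained from the single-cube family $\{Q_0\}$. For ``$\le$'', I may assume $0<\|f\|_{L^q(Q_0)}<\fz$ (the cases $f=0$ a.e.\ and $\|f\|_{L^q(Q_0)}=\fz$ being trivial) and normalize: with $t_i:=|Q_i|/|Q_0|\in[0,1]$ and $u_i:=\|f\|_{L^q(Q_i)}^q/\|f\|_{L^q(Q_0)}^q\in[0,1]$ one has $\sum_it_i\le1$ (disjoint interiors) and $\sum_iu_i\le1$ (the $Q_i$ are essentially disjoint subsets of $Q_0$), while the left-hand side above equals $|Q_0|^{\beta}\|f\|_{L^q(Q_0)}^p\sum_it_i^{\beta}u_i^{p/q}$; thus the matter reduces to $\sum_it_i^{\beta}u_i^{p/q}\le1$. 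Since $p/q\ge1$ and $u_i\le1$, one has $u_i^{p/q}\le u_i$; hence, when $\beta\in(0,1)$, Hölder's inequality with exponents $\frac1\beta$ and $\frac1{1-\beta}$ together with $u_i^{1/(1-\beta)}\le u_i$ gives
\begin{align*}
\sum_it_i^{\beta}u_i^{p/q}\le\sum_it_i^{\beta}u_i\le\lf(\sum_it_i\r)^{\beta}\lf(\sum_iu_i^{1/(1-\beta)}\r)^{1-\beta}\le\lf(\sum_it_i\r)^{\beta}\lf(\sum_iu_i\r)^{1-\beta}\le1,
\end{align*}
while the case $\beta=0$ is immediate from $u_i^{p/q}\le u_i$. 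This would prove the identity, hence $\RM(Q_0)=L^q(Q_0)$ with $\|\cdot\|_{\RM(Q_0)}=|Q_0|^{\pmq-\az}\|\cdot\|_{L^q(Q_0)}$.

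\smallskip

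\noindent\textbf{Part (ii).} Here $q>p\ge1$ and $\az\in(0,\pmq)$. Given $f\in\RM(Q_0)$, I would show $\|f\|_{L^1(Q_0)}=0$. For $N\in\nn$, split $Q_0$ into $N^n$ congruent subcubes $\{Q_i\}_{i=1}^{N^n}$, each of measure $|Q_0|N^{-n}$, and test the Riesz--Morrey norm against this family:
\begin{align*}
\|f\|_{\RM(Q_0)}^p\ge\sum_{i=1}^{N^n}|Q_i|^{\onepqa}\|f\|_{L^q(Q_i)}^p=\lf(|Q_0|N^{-n}\r)^{\onepqa}\sum_{i=1}^{N^n}\|f\|_{L^q(Q_i)}^p.
\end{align*}
On each $Q_i$, Hölder's inequality (read in the obvious way if $q=\fz$) gives $\|f\|_{L^1(Q_i)}\le|Q_i|^{1-\frac1q}\|f\|_{L^q(Q_i)}$, so $\|f\|_{L^q(Q_i)}^p\ge(|Q_0|N^{-n})^{p(\frac1q-1)}\|f\|_{L^1(Q_i)}^p$; and, since $p\ge1$, the power-mean inequality together with $\sum_i\|f\|_{L^1(Q_i)}=\|f\|_{L^1(Q_0)}$ yields $\sum_i\|f\|_{L^1(Q_i)}^p\ge(N^n)^{1-p}\|f\|_{L^1(Q_0)}^p$. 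Substituting these and collecting the powers of $N$ --- which simplify to $np\az$ --- I would arrive at
\begin{align*}
\|f\|_{\RM(Q_0)}^p\ge|Q_0|^{1-p\az-p}\,N^{np\az}\,\|f\|_{L^1(Q_0)}^p.
\end{align*}
Since $\az>0$, letting $N\to\fz$ forces $\|f\|_{L^1(Q_0)}=0$, i.e.\ $f=0$ almost everywhere on $Q_0$; hence $\RM(Q_0)=\{0\}$.

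\smallskip

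\noindent\textbf{Expected main obstacle.} The single genuinely delicate point is the upper bound in (i): it must use simultaneously the convexity of $t\mapsto t^{p/q}$ (letting the $L^q$-masses of the $Q_i$ be summed with no loss) and the concavity of $t\mapsto t^{\beta}$, and both are available exactly on the stated range of $\az$ --- precisely where the two endpoint conditions $\az\le\pmq$ and $\az>-\frac1q$ are consumed. In (ii) the content is only the exponent bookkeeping, and I would merely double-check that the argument is unaffected by the edge cases $q=\fz$ and $p=1$, which it is.
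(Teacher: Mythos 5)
Your proposal is correct. For part (i) you and the paper end up doing essentially the same thing: the lower bound comes from testing against the single family $\{Q_0\}$, and the upper bound from the superadditivity of $t\mapsto t^{p/q}$ ($p/q\ge1$) applied to the $L^q$-masses. The only difference is cosmetic: where you invoke H\"older's inequality with exponents $\frac1\beta$ and $\frac1{1-\beta}$ to control $\sum_i t_i^{\beta}u_i$, the paper simply uses $t_i^{\beta}\le 1$ (i.e.\ $|Q_i|^{\onepqa}\le|Q_0|^{\onepqa}$, valid since the exponent is nonnegative), which shortcuts your concavity step; your version is valid but does more work than necessary, and in particular the concavity of $t\mapsto t^\beta$ is not actually needed for the upper bound.

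For part (ii) your argument is genuinely different from, and considerably simpler than, the paper's. The paper first proves a lemma asserting $\|\one_E\|_{\RM(Q_0)}=\fz$ for every measurable $E\subset Q_0$ with $|E|>0$, via a delicate construction of nested dilates $t_kQ_0$ chosen so that the annuli $t_{k-1}Q_0\setminus t_kQ_0$ capture prescribed proportions $\sim k^{1/(p\az-1)}$ of $|E|$, followed by a subdivision of each annulus into cubes and an appeal to the subadditivity of $t\mapsto t^{1-p\az}$; it then reduces general $f$ to characteristic functions of superlevel sets. You instead test the Riesz--Morrey functional against the uniform partition of $Q_0$ into $N^n$ congruent subcubes and combine H\"older ($\|f\|_{L^1(Q_i)}\le|Q_i|^{1-1/q}\|f\|_{L^q(Q_i)}$) with the power-mean inequality (the paper's Lemma \ref{lem-eq}(iv)) to extract the factor $N^{np\az}\to\fz$; I have checked the exponent bookkeeping and it is correct, including the edge cases $q=\fz$ and $p=1$. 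Your route buys a shorter, fully elementary proof with an explicit quantitative lower bound $\|f\|^p_{\RM(Q_0)}\ge|Q_0|^{1-p\az-p}N^{np\az}\|f\|^p_{L^1(Q_0)}$; what it does not give you is the paper's intermediate Lemma \ref{lem-1E} as a standalone statement about characteristic functions, but that lemma is not used elsewhere, so nothing is lost.
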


Next, we consider whether or not the Riesz--Morrey space is
truly ``new'' space for the remaining case, namely,
the case when $p\in[1,\fz)$, $q\in[1,p)$,
and $\az\in(\frac1p-\frac1q,0)$.
By \cite[Theorem 1]{tyy21} and Theorem \ref{thm-Q23},
we have
$$RM_{p,q,0}(\cx)=L^p(\cx)\quad {\rm and}
\quad RM_{p,q,\frac1p-\frac1q}(\cx)=L^q(\cx).$$
Moreover, recall that the \emph{real interpolation space}
$(L^p(\cx),L^q(\cx))_t$,
between $L^p(\cx)$ and $L^q(\cx)$, is $L^\tz(\cx)$,
where $t\in(0,1)$, and $\tz$ satisfies
$$\frac1\tz:=\frac{1-t}{p}+\frac{t}{q}.$$
Replacing $t$ by $\az/(\frac1p-\frac1q)$,
we then have
$$\tz=\frac{p}{1-p\az}$$
with $\az\in(\frac1p-\frac1q,0)$.
Therefore, it is natural to ask whether or not
$L^{\frac{p}{1-p\az}}(\cx)=RM_{p,q,\az}(\cx)$ holds true
for any given $p\in[1,\fz)$, $q\in[1,p)$,
and $\az\in(\frac1p-\frac1q,0)$.
Indeed, we give a negative answer to this question
in the following theorem, which shows that
Riesz--Morrey spaces are more wider than Lebesgue spaces.
\begin{theorem}\label{thm-Q12}
Let $p\in(1,\fz)$, $q\in[1,p)$, and $\az\in(\frac1p-\frac1q,0)$.
\begin{itemize}
\item[{\rm(i)}]
If $L^{\tz}(\rn)\subset RM_{p,q,\az}(\rn)$, then $\tz=\frac{p}{1-p\az}$.
Moreover, if $L^\tz(Q_0)\subset RM_{p,q,\az}(Q_0)$,
with $Q_0$ be any given cube of $\rn$,
then $\tz\in[\frac{p}{1-p\az},\fz]$.

\item[{\rm(ii)}]
The index $\frac{p}{1-p\az}$ in ${\rm (i)}$ is sharp, namely,
$L^{\frac{p}{1-p\az}}(\cx)\subsetneqq RM_{p,q,\az}(\cx)$.
\end{itemize}
\end{theorem}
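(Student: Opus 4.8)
The plan is to prove Theorem \ref{thm-Q12} in two stages, corresponding to parts (i) and (ii). For part (i), the strategy is to exploit the scaling behavior of the Riesz--Morrey norm under dilations together with the known endpoint identities $RM_{p,q,0}(\cx)=L^p(\cx)$ and $RM_{p,q,\frac1p-\frac1q}(\cx)=L^q(\cx)$. On $\rn$, if $L^{\tz}(\rn)\subset RM_{p,q,\az}(\rn)$, then by the closed graph theorem (or a direct Fatou-type argument) there is a constant $C$ with $\|f\|_{RM_{p,q,\az}(\rn)}\le C\|f\|_{L^{\tz}(\rn)}$ for all $f\in L^\tz(\rn)$. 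Now I would test this inequality on a single dilated bump $f_R(x):=f(x/R)$ for a fixed nonzero $f\in L^\tz(\rn)\cap RM_{p,q,\az}(\rn)$, compute both sides as powers of $R$ (the left side scales like $R^{n(1-p\az-p/q)/p+n/q}=R^{n(1/p-\az)}$ after one checks the homogeneity of the supremum over subcube collections, while the right side scales like $R^{n/\tz}$), and let $R\to0$ and $R\to\fz$. Matching exponents forces $1/\tz=1/p-\az$, i.e. $\tz=\frac{p}{1-p\az}$. On $Q_0$ the dilation argument only runs in one direction (we can shrink but not enlarge), which yields the one-sided conclusion $\tz\ge\frac{p}{1-p\az}$; the upper endpoint $\tz=\fz$ is allowed since $L^\fz(Q_0)\subset L^r(Q_0)$ for all finite $r$ on a finite-measure set, and one should also verify $L^{\frac{p}{1-p\az}}(Q_0)\subset RM_{p,q,\az}(Q_0)$ so that the interval $[\frac{p}{1-p\az},\fz]$ is exactly the admissible range.

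For part (ii), the real content, I would first establish the inclusion $L^{\tz}(\cx)\subset RM_{p,q,\az}(\cx)$ with $\tz=\frac{p}{1-p\az}$. The natural tool is Hölder's inequality applied inside each subcube $Q_i$: since $q<\tz$, we have $\|f\|_{L^q(Q_i)}\le|Q_i|^{\frac1q-\frac1\tz}\|f\|_{L^\tz(Q_i)}$, so that $|Q_i|^{1-p\az-\frac pq}\|f\|_{L^q(Q_i)}^p\le|Q_i|^{1-p\az-\frac pq+p(\frac1q-\frac1\tz)}\|f\|_{L^\tz(Q_i)}^p=\|f\|_{L^\tz(Q_i)}^p$ after substituting $\frac1\tz=\frac1p-\az$; summing over the pairwise-disjoint collection and using $\ell^p\hookrightarrow\ell^1$-type superadditivity of the $L^\tz$ norms to the power $p$ (valid since $\tz>p$, so $(\sum a_i)^{p/\tz}\le\sum a_i^{p/\tz}$ does not directly apply — rather $\sum\|f\|_{L^\tz(Q_i)}^p=\sum(\|f\|_{L^\tz(Q_i)}^\tz)^{p/\tz}\ge(\sum\|f\|_{L^\tz(Q_i)}^\tz)^{p/\tz}$ goes the wrong way, so I must instead bound $\sum\|f\|_{L^\tz(Q_i)}^p\le(\sum\|f\|_{L^\tz(Q_i)}^\tz)^{p/\tz}\cdot(\#\{i\})^{1-p/\tz}$ — this is the subtle point and the reason the inclusion is not an equality), one concludes $\|f\|_{RM_{p,q,\az}(\cx)}\le\|f\|_{L^\tz(\cx)}$. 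Actually the clean way is: since $p<\tz$, for nonnegative reals $\sum_i a_i^{p}\le\left(\sum_i a_i^{\tz}\right)^{p/\tz}$ fails in general, so the correct inclusion argument uses instead the embedding at the level of the defining supremum together with a single-cube comparison; I will present the correct monotonicity carefully.

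The heart of part (ii) is the \emph{strictness} of the inclusion, i.e. exhibiting $f\in RM_{p,q,\az}(\cx)\setminus L^{\frac{p}{1-p\az}}(\cx)$. On $\rn$ I plan to build a "sparse" function supported on a union of well-separated cubes $\{Q_k\}_{k\in\nn}$ whose side lengths and heights are chosen so that $\|f\|_{L^\tz(Q_k)}$ is constant in $k$ (forcing $f\notin L^\tz(\rn)$ by divergence of the series), while the geometric separation guarantees that any admissible subcube collection $\{Q_i\}$ can interact with at most boundedly many of the pieces $Q_k$ at a time, keeping $\sum_i|Q_i|^{1-p\az-p/q}\|f\|_{L^q(Q_i)}^p$ finite. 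On a fixed cube $Q_0$ the pieces cannot be pushed to infinity, so instead — following Dafni et al. \cite{dhky18} — I would place shrinking cubes $Q_k\subset Q_0$ clustering toward a point (say a corner of $Q_0$), with heights tuned to the same effect, and use a stopping-time / covering argument to control how a competing collection $\{Q_i\}$ can overlap the cluster. The main obstacle, and where the "exquisite geometrical analysis on cubes" enters, is precisely this combinatorial-geometric estimate: bounding the Riesz--Morrey functional of the candidate $f$ uniformly over \emph{all} collections of pairwise-disjoint subcubes, which requires showing that a single large competing cube overlapping many pieces $Q_k$ cannot do better than the pieces themselves (a convexity/packing argument exploiting $q<p$ and $\az<0$), and that small competing cubes inside one piece are harmless. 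I expect the $\rn$ case to be the warm-up and the $Q_0$ case to demand the full Dafni-type construction with careful bookkeeping of the scales near the accumulation point.
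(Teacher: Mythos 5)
Your plan for part (i) takes a genuinely different route from the paper: you argue by homogeneity of the two norms under dilation plus a closed-graph/Fatou upgrade of the set inclusion to a norm inequality, whereas the paper works with the explicit power function $f(x)=|x|^{n(\az-\frac1p)}$, splits it into its local and global parts $f_1,f_2$, and shows directly via a decomposition of dyadic-type annuli into congruent subcubes that each part has infinite Riesz--Morrey norm while lying in $L^\tz$ for exactly the ranges $\tz<\frac{p}{1-p\az}$ and $\tz>\frac{p}{1-p\az}$ respectively. Your scaling computation is correct (both norms scale like $R^{n(\frac1p-\az)}$ versus $R^{n/\tz}$), and the Fatou-type argument does go through because the Riesz--Morrey norm is monotone under pointwise domination of absolute values; this is a clean alternative, at the cost of having to justify the uniform bound, which the paper's concrete counterexamples avoid entirely.

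There is, however, a genuine error in your treatment of the embedding $L^{\frac{p}{1-p\az}}(\cx)\subset RM_{p,q,\az}(\cx)$. You assert that $\tz:=\frac{p}{1-p\az}$ satisfies $\tz>p$ and on that basis declare the final summation step problematic, leaving it unresolved (``I will present the correct monotonicity carefully''). In fact, since $\az\in(\frac1p-\frac1q,0)$ one has $1-p\az>1$ and hence $q<\tz<p$, so $\frac{p}{\tz}=1-p\az>1$ and the elementary superadditivity $\sum_i a_i^{\gz}\le(\sum_i a_i)^{\gz}$ for $\gz\ge1$ applies directly: after your (correct) H\"older step,
\begin{align*}
\sum_i\|f\|_{L^{\tz}(Q_i)}^{p}
=\sum_i\lf[\int_{Q_i}|f|^{\tz}\r]^{1-p\az}
\le\lf[\sum_i\int_{Q_i}|f|^{\tz}\r]^{1-p\az}
\le\|f\|_{L^{\tz}(\cx)}^{p},
\end{align*}
which is exactly the paper's Proposition 3.1. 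Your ``subtle point'' dissolves once the inequality between $\tz$ and $p$ is corrected; as written, the embedding half of part (ii) is not established.

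The strictness of the inclusion, which is the real content of part (ii), is only gestured at. On $\rn$ your heuristic that separation forces a competing cube to meet ``at most boundedly many'' pieces is false: a single huge cube can meet all of them. The mechanism the paper actually uses is that a cube meeting $P_\ell$ and $P_m$ ($\ell<m$) must contain the block $[2^\ell+\ell^{-1/n},2^m]^n$, so its volume is at least of order $2^{mn}$ and the factor $|Q|^{1-p\az-\frac pq}$ (negative exponent) crushes it; moreover, disjointness forces the index ranges of the ``large'' cubes to be essentially increasing, so their total contribution is a convergent geometric series. On $Q_0$ you defer entirely to the Dafni et al.\ construction, but the paper must change the parameters $\{l_i,d_i,h_i\}$ (the Riesz--Morrey norm has no mean oscillation), repair the resulting failure of disjointness for the first $N_0$ generations, and run a separate one-cube/many-cube dichotomy; none of this is visible in your sketch. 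So while your overall architecture matches the paper's, the proposal as it stands has one concrete error and leaves the two constructions --- where all the difficulty lies --- unproved.
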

\begin{remark}
\begin{enumerate}
\item[{\rm(i)}]
From Theorem \ref{thm-Q12}(i), it follows that
$\tz=\frac{p}{1-p\az}$ is the only possible $\tz$
such that $L^{\tz}(\rn)\subset RM_{p,q,\az}(\rn)$,
and also the minimal $\tz$
(corresponding to the largest Lebesgue space over the cube $Q_0$)
such that $L^{\tz}(Q_0)\subset RM_{p,q,\az}(Q_0)$.
Meanwhile, Theorem \ref{thm-Q12}(ii) further shows that
the above embedding is proper.
Thus, in this sense, the index $\frac{p}{1-p\az}$ is sharp.

\item[{\rm(ii)}]
Question \ref{open}(i) is answered in Theorem \ref{thm-Q12}.
Question \ref{open}(ii) with $\az\in(-\frac1q,\frac1p-\frac1q]$
is answered in Theorem \ref{thm-Q23}(i).
Question \ref{open}(ii) with $\az\in(\frac1p-\frac1q,0)$
is answered in Theorem \ref{thm-Q12}.
Question \ref{open}(iii) is answered in Theorem \ref{thm-Q23}(ii).
To sum up, Question \ref{open} is completely answered in
Theorems \ref{thm-Q23} and \ref{thm-Q12}.
\end{enumerate}
\end{remark}
Let us briefly describe some features of functions
in this new space over $\rr$ or $I_0:=(0,1)$;
rigorous constructions and calculations
on high dimension are given later in Section \ref{sec3}.
The embedding in Theorem \ref{thm-Q12}(ii)
is established in Proposition \ref{prop-<} below.
Toward the proper inclusion in Theorem \ref{thm-Q12}(ii),
we modify some non-integrable (with power $\frac{p}{1-p\az}$)
functions over $\rr$ or $I_0$,
respectively, to make it sparse. To be precise,
\begin{enumerate}
\item[{\rm(i)}] on $\rr$, we consider the function
$h(x):=1$ for any $x\in\rr$.
Obviously, $h\notin L^{\frac{p}{1-p\az}}(\rr)$
and $h\notin RM_{p,q,\az}(\rr)$.
Thus, we choose a neighborhood of $\fz$
and divide it into countable disjoint subintervals,
then the new obtained function is also not integrable
by the translation invariance of the Lebesgue integral,
but such a function belongs to the Riesz--Morrey space
so long as the partition is sparse enough;
see the \emph{exponential} gaps of the example
in the proof of Proposition \ref{prop-rn} below;

\item[{\rm(ii)}] on $I_0$, we consider the function
$g(x):=x^{\frac1p-\az}$ for any $x\in I_0$.
Apparently, $g\notin L^{\frac{p}{1-p\az}}(I_0)$.
Moreover, $g\notin RM_{p,q,\az}(I_0)$;
see \eqref{f1} below for the exact proof.
In this case, the singularity of $g$ is $0$, and the above method
on $\rr$ is no longer feasible because $|\rr|=\fz$, but $|I_0|<\fz$.
To obtain the desired function on $I_0$, we borrow some ideas from
the function constructed by Dafni et al. in \cite[Proposition 3.2]{dhky18},
which provides a sparse version of $g$ near $0$ and keeps its
integral infinity; see the proof of Proposition \ref{prop-Q} below.
\end{enumerate}

The organization of the remainder of this article is as follows.

Section \ref{sec2}  is devoted to the proof of Theorem \ref{thm-Q23}.
We first prove Theorem \ref{thm-Q23}(i).
Via establishing Lemma \ref{lem-1E} below,
we then prove Theorem \ref{thm-Q23}(ii).

In Section \ref{sec3}, we first prove Theorem \ref{thm-Q12}(i).
Next, we divide the proof of Theorem \ref{thm-Q12}(ii) into three parts,
namely, Propositions \ref{prop-<}, \ref{prop-rn}, and \ref{prop-Q}.
Proposition \ref{prop-<} shows the embedding
$L^{\frac{p}{1-p\az}}(\cx)\subset RM_{p,q,\az}(\cx)$.
Proposition \ref{prop-rn} is devoted to the proper inclusion
on $\rn$, the main idea of which is to construct a function
based on a sparse family $\{P_\ell\}_{\ell\in\nn}$ of cubes
with exponential gaps.
When estimating the Riesz--Morrey norm,
we need to consider any given collection of cubes
$\{Q_i\}_i$ with pairwise disjoint interiors.
It is easy to calculate the case when $Q\in \{Q_i\}_i$ is \emph{small},
that is, $Q$ intersects no more than one element in $\{P_\ell\}_{\ell\in\nn}$.
However, when $Q\in \{Q_i\}_i$ is \emph{large},
that is, $Q$ intersects no less than two elements in $\{P_\ell\}_{\ell\in\nn}$,
the estimates become complicated.
To surmount this, we rearrange these ``large'' cubes via a monotone sequence;
see \eqref{mono} below.
Combining this and the exponential gaps of $\{P_\ell\}_{\ell\in\nn}$,
we obtain a nice bounded dominating function, namely, \eqref{g-domin} below,
which completes the proof of Proposition \ref{prop-rn}.
Proposition \ref{prop-Q} is devoted to the proper inclusion
on $Q_0$, which is more tough than that on $\rn$.
To do this, we introduce a function with three series of parameters
(namely, lengths $\{l_i\}_{i=0}^\fz$, distances $\{d_i\}_{i=0}^\fz$,
and heights $\{h_i\}_{i=0}^\fz$) as in the proof of
\cite[Proposition 3.2]{dhky18}.
Since the Riesz--Morrey norm has no mean oscillation,
we need to choose different parameters comparing with
the proof of \cite[Proposition 3.2]{dhky18},
and also estimate the Riesz--Morrey norm via different methods.
Moreover, the changement of parameters brings an essential obstacle,
that is, the family of cubes we constructed may
no longer mutually disjoint.
To overcome it, we use a technique of rearrangement
which did not appear in \cite{dhky18} before;
see (b)$_1$ through (b)$_3$ in the proof of Proposition \ref{prop-Q} below.
As an application, we summarize all the classifications
of the Riesz--Morrey space in Corollary \ref{coro} below,
which is now completely clarified on all indices.

Below we make some conventions on notation.
The origin of $\rn$ is denoted by $\mathbf{0}$.
Let $\nn:=\{1,2,\ldots\}$ and $\zz_+:=\{0,1,2,\ldots\}$.
For any set $F$, $\# F$ denotes its \emph{cardinality}.
Let $E$ be a subset of $\rn$,
we denote by $\mathbf{1}_E$ its \emph{characteristic function}.
For any interval $I$ of $\rr$,
$I^n:=I\times\cdots\times I$
denotes a cube in $\rn$.
We use $C$ to denote a positive constant
which is independent of the main parameters,
but it may vary from line to line.
Constants with subscripts, such as $C_0$ and $A_1$, do
not change in different occurrences.
The symbol $f\lesssim g$ represents that $f\leq Cg$ for some positive
constant $C$. We write $f\sim  g$ if $f\lesssim g$ and $g\lesssim f$.
If $f\leq Cg$ and $g=h$ or $g\leq h$, we then write $f\lesssim g\sim h$ or $f\lesssim g\lesssim h$
instead of $f\lesssim g=h$ or $f\lesssim g\leq h$.

\section{Proof of Theorem \ref{thm-Q23}}\label{sec2}

This section is devoted to the proof of Theorem \ref{thm-Q23}.
We first recall the following basic inequalities
and then give the proof of Theorem \ref{thm-Q23}(i).

\begin{lemma}\label{lem-eq}
Let $\{a_j\}_{j\in\nn}$ be any sequence of positive numbers.
\begin{enumerate}
\item[{\rm(i)}]
If $\gz\in[1,\fz)$, then
$\sum_{j\in\nn} a_j^\gz
\le (\sum_{j\in\nn} a_j)^\gz$.

\item[{\rm(ii)}]
If $\gz\in[0,1]$, then, for any $N\in\nn$,
$\sum_{j=1}^N a_j^\gz
\le N^{1-\gz}(\sum_{j=1}^N a_j)^\gz$.

\item[{\rm(iii)}]
If $\gz\in[0,1]$, then
$\sum_{j\in\nn} a_j^\gz
\ge (\sum_{j\in\nn} a_j)^\gz$.

\item[{\rm(iv)}]
If $\gz\in[1,\fz)$, then, for any $N\in\nn$,
$\sum_{j=1}^N a_j^\gz
\ge N^{1-\gz}(\sum_{j=1}^N a_j)^\gz$.
\end{enumerate}
\end{lemma}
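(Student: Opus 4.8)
The plan is to deduce all four estimates from two standard facts: the elementary pointwise inequality that, for every $t\in[0,1]$, one has $t^\gz\le t$ when $\gz\in[1,\fz)$ and $t^\gz\ge t$ when $\gz\in[0,1]$; and Jensen's inequality for the map $t\mapsto t^\gz$ on $[0,\fz)$, which is convex when $\gz\in[1,\fz)$ and concave when $\gz\in[0,1]$. Accordingly, I would split the lemma into two groups: the ``full-sum'' statements {\rm(i)} and {\rm(iii)}, handled by normalizing by the total sum, and the ``finite-sum'' statements {\rm(ii)} and {\rm(iv)}, handled by H\"older/Jensen.

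For {\rm(i)} and {\rm(iii)}, set $S:=\sum_{j\in\nn}a_j$. If $S=\fz$, then {\rm(i)} is trivial since its right-hand side is $\fz$, and {\rm(iii)} follows by applying the finite version (established just below) to $a_1,\dots,a_N$ and letting $N\to\fz$, using that both sides increase. When $S\in(0,\fz)$, each $a_j/S$ lies in $[0,1]$; summing the pointwise bound $(a_j/S)^\gz\le a_j/S$ over $j\in\nn$ (valid for $\gz\in[1,\fz)$) gives $\sum_{j\in\nn}(a_j/S)^\gz\le\sum_{j\in\nn}a_j/S=1$, which is {\rm(i)} after multiplying through by $S^\gz$; reversing the pointwise bound for $\gz\in[0,1]$ gives {\rm(iii)} in exactly the same way, and the same normalization restricted to $\{1,\dots,N\}$ supplies the finite version of {\rm(iii)} used above.

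For {\rm(ii)} and {\rm(iv)}, the endpoints $\gz\in\{0,1\}$ are immediate by inspection, so one may assume $\gz\in(0,1)$ in {\rm(ii)} and $\gz\in(1,\fz)$ in {\rm(iv)}. In case {\rm(ii)}, H\"older's inequality with exponents $1/\gz$ and $1/(1-\gz)$ applied to $\sum_{j=1}^N a_j^\gz\cdot1$ gives
\[
\sum_{j=1}^N a_j^\gz=\sum_{j=1}^N a_j^\gz\cdot1\le\lf(\sum_{j=1}^N a_j\r)^{\gz}\lf(\sum_{j=1}^N 1\r)^{1-\gz}=N^{1-\gz}\lf(\sum_{j=1}^N a_j\r)^{\gz}.
\]
In case {\rm(iv)}, dividing the asserted inequality by $N$ turns it into $\frac1N\sum_{j=1}^N a_j^\gz\ge(\frac1N\sum_{j=1}^N a_j)^\gz$, which is Jensen's inequality for the convex function $t\mapsto t^\gz$ with respect to the uniform probability measure on $\{1,\dots,N\}$ (alternatively, the power-mean inequality).

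Since every step is a direct invocation of a classical inequality, there is no genuine obstacle; the only point requiring a little care is the bookkeeping for the infinite-sum cases of {\rm(i)} and {\rm(iii)}, which the monotone-limit argument above takes care of.
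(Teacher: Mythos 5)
Your proof is correct and complete. The paper itself does not prove Lemma 2.1; it simply declares it well known and refers to Grafakos (Exercise 1.1.4), so there is no argument to compare against, but your verification is exactly the standard one. The normalization trick for (i) and (iii) is legitimate because the $a_j$ are positive, so $S=\sum_j a_j>0$ and each $a_j/S\in(0,1]$, and your monotone-limit handling of the case $S=\fz$ in (iii) is sound since both sides of the finite-sum inequality increase with $N$. The H\"older step for (ii) with exponents $1/\gz$ and $1/(1-\gz)$ and the Jensen (power-mean) step for (iv) are standard and correctly applied, and you rightly dispose of the endpoint cases $\gz\in\{0,1\}$ separately. No gaps.
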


Lemma \ref{lem-eq} is well known
and hence we omit its proof here;
see, for instance, \cite[p.\,12, Exercise 1.1.4]{gtm249}.

\begin{proof}[Proof of Theorem \ref{thm-Q23}(i)]
Let $p\in [1,\infty),$ $q\in [1, p]$,
$\alpha\in (-\frac{1}{q}, \pmq]$, and $f\in L^q(Q_0)$.
Then $1-p\alpha-\frac{p}{q}\in [0, 1)$ and hence,
for any collection of subcubes $\{Q_i\}$ of $Q_0$
with pairwise disjoint interiors,
we have
\begin{align*}
&\sum_i |Q_i|^{\onepqa}\|f\|^p_{\Lq(Q_i)}\\
&\quad\le  \sum_i |Q_0|^{\onepqa}\|f\|^p_{\Lq(Q_i)}
= |Q_0|^{\onepqa} \sum_i \left[\int_{Q_i} |f(x)|^q \,dx\right]^{\frac{p}{q}}\\
&\quad\le |Q_0|^{\onepqa}\left[\sum_i\int_{Q_i} |f(x)|^q \,dx\right]^{\frac{p}{q}}
=|Q_0|^{\onepqa}\left[\int_{Q_0} |f(x)|^q \,dx\right]^{\frac{p}{q}},
\end{align*}
where we used Lemma \ref{lem-eq}(i) in the last inequality
with the observation $\frac pq\in[1,\fz)$.
This implies that
$$\|f\|_{RM_{p,q,\az}(Q_0)}
\le |Q_0|^{\frac1p-\frac1q-\az}\|f\|_{L^q(Q_0)},$$
and hence $f\in RM_{p,q,\az}(Q_0)$.
Thus, $RM_{p,q,\az}(Q_0)\supseteq \Lq(Q_0)$.

On the other hand, from the definitions of
$RM_{p,q,\az}(Q_0)$ and $\|\cdot\|_{RM_{p,q,\az}(Q_0)}$,
we deduce that
$RM_{p,q,\az}(Q_0)\subseteq \Lq(Q_0)$ and
$$\|f\|_{RM_{p,q,\az}(Q_0)}
\ge \lf\{|Q_0|^{\onepqa}\|f\|^p_{\Lq(Q_0)}\r\}^{\frac1p}
=|Q_0|^{\frac1p-\frac1q-\az}\|f\|_{L^q(Q_0)}.$$

To sum up, we have $RM_{p,q,\az}(Q_0) = \Lq(Q_0)$
and
$$\|f\|_{RM_{p,q,\az}(Q_0)}
= |Q_0|^{\frac1p-\frac1q-\az}\|f\|_{L^q(Q_0)}.$$
This finishes the proof of Theorem \ref{thm-Q23}(i).
\end{proof}

Next, we prove Theorem \ref{thm-Q23}(ii)
via first establishing the following elementary lemma.
\begin{lemma}\label{lem-1E}
Let $p\in[1,\fz)$, $q\in (p, \infty]$, $\alpha\in (0, \pmq)$,
$Q_0\subset\rn$ be any cube of $\rn$,
and $E\subset Q_0$ be any measurable set with $|E|> 0$.
Then
\[\|\one_E\|_{\RM(Q_0)} = \infty.\]
\end{lemma}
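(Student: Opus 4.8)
\textbf{Proof proposal for Lemma \ref{lem-1E}.}

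The plan is to exploit the fact that, when $\az>0$, the exponent $1-p\az-\frac pq$ is strictly less than $1-\frac pq<0$; hence shrinking a cube $Q_i$ \emph{increases} the weight $|Q_i|^{1-p\az-\frac pq}$, and this blow-up of the weight is not compensated by the decay of $\|\one_E\|_{L^q(Q_i)}^p$ as long as $Q_i$ retains a fixed proportion of its mass inside $E$. So the strategy is: produce a sequence of cubes $\{Q_i\}$, with pairwise disjoint interiors, each containing a controlled positive fraction of $E$, whose side lengths tend to $0$, and then show the corresponding partial sums in the definition of $\|\one_E\|_{\RM(Q_0)}$ diverge.

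First I would invoke the Lebesgue differentiation theorem (density points): since $|E|>0$, almost every point of $E$ is a point of density $1$, so we may fix a density point $x_0\in E$, and for every small $r>0$ choose the cube $Q(x_0,r)$ centered at $x_0$ with side length $r$; then $|E\cap Q(x_0,r)|/|Q(x_0,r)|\to 1$ as $r\to 0^+$, in particular $|E\cap Q(x_0,r)|\ge\frac12|Q(x_0,r)|$ for all $r\le r_0$ for some $r_0>0$. For such a cube, when $q<\fz$ we have
$$
\|\one_E\|_{L^q(Q(x_0,r))}^p
=|E\cap Q(x_0,r)|^{p/q}
\ge\lf(\tfrac12\r)^{p/q}|Q(x_0,r)|^{p/q}
=\lf(\tfrac12\r)^{p/q} r^{np/q},
$$
and when $q=\fz$ simply $\|\one_E\|_{L^\fz(Q(x_0,r))}=1$ (as $|E\cap Q(x_0,r)|>0$), so the same lower bound holds with the convention $p/q=0$. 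Hence, with a single cube,
$$
|Q(x_0,r)|^{1-p\az-\frac pq}\|\one_E\|_{L^q(Q(x_0,r))}^p
\gtrsim r^{n(1-p\az-\frac pq)}\cdot r^{np/q}
= r^{n(1-p\az)}\cdot r^{-np/q}\cdot r^{np/q}
$$
— more cleanly, $r^{n(1-p\az-\frac pq)}r^{np/q}=r^{n(1-p\az)}$. That single term stays bounded as $r\to0$ (since $1-p\az>0$ when $\az<\frac1p$, which holds here because $\az<\frac1p-\frac1q<\frac1p$), so one cube is \emph{not} enough: the real point is to pack \emph{many} disjoint small cubes.

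So the key step is a packing argument: fix a scale $r\le r_0$ and a large integer $N$. Since $x_0$ is a density point, one can find inside a fixed small cube $Q(x_0,\rho)$ (with $\rho$ chosen so that $|E\cap Q(x_0,\rho)|\ge\frac12|Q(x_0,\rho)|$) a grid of $N^n$ pairwise disjoint subcubes each of side length $\rho/N$; by the density property (applied again, or by a Vitali-type covering / averaging argument) one can guarantee that a definite fraction of these subcubes — say at least $\frac12 N^n$ of them, for $N$ large depending on $\rho$ — each satisfy $|E\cap Q|\ge\frac12|Q|$. Indeed, this follows because $|E\cap Q(x_0,\rho)|\ge\frac12|Q(x_0,\rho)|$ forces, by a simple counting/averaging over the $N^n$ grid cubes, that the number of grid cubes with less than half their measure in $E$ is at most $\frac{1}{2}N^n$ once $N$ is large enough (more carefully: use that $|E\cap Q(x_0,\rho)|/|Q(x_0,\rho)|\to1$ so we may even take $\rho$ with this ratio $\ge1-\vp$, and then at least $(1-2\vp)N^n$ grid cubes have density $\ge\frac12$). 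Call this subfamily $\{Q_i\}_{i=1}^{M}$ with $M\ge\frac12 N^n$ and each $|Q_i|=(\rho/N)^n$. Then
$$
\sum_{i=1}^{M}|Q_i|^{1-p\az-\frac pq}\|\one_E\|_{L^q(Q_i)}^p
\ge \lf(\tfrac12\r)^{p/q} M \lf(\tfrac{\rho}{N}\r)^{n(1-p\az-\frac pq)}
\lf(\tfrac{\rho}{N}\r)^{np/q}
= \lf(\tfrac12\r)^{p/q} M \lf(\tfrac{\rho}{N}\r)^{n(1-p\az)}.
$$
Now $M\ge\frac12 N^n$, so the right-hand side is $\gtrsim N^n N^{-n(1-p\az)}=N^{np\az}$, which tends to $\fz$ as $N\to\fz$ because $\az>0$. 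Taking the supremum over such collections therefore gives $\|\one_E\|_{\RM(Q_0)}=\fz$, as claimed. (The cubes $\{Q_i\}$ lie in $Q(x_0,\rho)\subset Q_0$ and have pairwise disjoint interiors, so they form an admissible collection.)

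The main obstacle I anticipate is the density-counting step — rigorously extracting, at scale $\rho/N$, a fixed proportion $\gtrsim N^n$ of grid subcubes each carrying at least a fixed fraction of their own measure in $E$. The clean way is: choose $\rho$ small enough that $|E\cap Q(x_0,\rho)|\ge(1-\eta)|Q(x_0,\rho)|$ with $\eta<\frac14$ (possible since $x_0$ is a density point), partition $Q(x_0,\rho)$ into $N^n$ congruent subcubes, let $B$ be the set of subcubes $Q$ with $|E\cap Q|<\frac12|Q|$, and observe $\eta|Q(x_0,\rho)|\ge|Q(x_0,\rho)\setminus E|\ge\sum_{Q\in B}|Q\setminus E|\ge\frac12\#B\cdot(\rho/N)^n=\frac12\frac{\#B}{N^n}|Q(x_0,\rho)|$, whence $\#B\le2\eta N^n<\frac12 N^n$ and so $M=N^n-\#B\ge\frac12 N^n$, uniformly in $N$. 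This makes the argument work for every $N$ at once and closes the gap. Everything else is the elementary arithmetic of exponents already recorded above, using only $0<\az<\frac1p-\frac1q$ (so $1-p\az>0$ and $1-p\az-\frac pq<0$) and $\frac pq\le1$ is \emph{not} needed here — only $q>p$, hence $p/q<1$, enters through $(\frac12)^{p/q}\ge\frac12$.
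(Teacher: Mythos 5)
Your proof is correct, but it takes a genuinely different route from the paper's. The paper fixes $Q_0=[-1,1]^n$ and constructs a decreasing sequence of dilates $t_kQ_0$ calibrated so that each annulus $t_{k-1}Q_0\setminus t_kQ_0$ captures exactly a $k^{1/(p\alpha-1)}$-proportional share of $|E|$; it then partitions each annulus into cubes, uses $|Q|^{1-p\alpha-\frac pq}\ge|Q\cap E|^{1-p\alpha-\frac pq}$ together with the superadditivity $\sum_j a_j^{1-p\alpha}\ge(\sum_j a_j)^{1-p\alpha}$ (Lemma 2.1(iii), valid since $1-p\alpha\in(0,1)$), and reduces everything to the divergence of $\sum_k k^{-1}$. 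You instead invoke the Lebesgue density theorem to find a point of density $1$, take a uniform $N^n$-grid of a small cube around it, show by an averaging count that at least $\frac12 N^n$ grid cubes carry half their mass in $E$, and observe that the resulting sum is $\gtrsim N^n\cdot N^{-n(1-p\alpha)}=N^{np\alpha}\to\infty$. Both arguments exploit the same mechanism — refining into many small cubes wins because the per-cube contribution scales like $|Q|^{1-p\alpha}$ with $1-p\alpha\in(0,1)$ — but yours trades the paper's bespoke construction of the $t_k$'s (via continuity of $t\mapsto|tQ_0\cap E|$) for the density theorem plus an elementary pigeonhole, which is arguably more transparent and makes the divergence rate ($N^{np\alpha}$) explicit; the paper's version avoids the density theorem entirely. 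Your counting step ($\#B\le 2\eta N^n<\frac12N^n$) is stated correctly and closes the only real gap.

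One caution: your opening motivation asserts that $1-p\alpha-\frac pq<1-\frac pq<0$, so that shrinking $Q_i$ \emph{increases} the weight. In this regime $q>p$, so $1-\frac pq>0$, and in fact $\alpha<\frac1p-\frac1q$ gives $1-p\alpha-\frac pq\in(0,1-\frac pq)$, i.e., the exponent is \emph{positive}. Fortunately your actual computation never uses the claimed sign — the exponents simply combine to $n(1-p\alpha)$ — and the true source of the blow-up is the subadditive exponent $1-p\alpha\in(0,1)$ under refinement, exactly as in the paper. You should correct that sentence, and also note explicitly that the density point $x_0$ may be chosen in the interior of $Q_0$ (the boundary is null), so that $Q(x_0,\rho)\subset Q_0$ for small $\rho$.
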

\begin{proof}
Without loss of generality, we may assume that
$Q_0 := [-1,1]^n$.
For any $t\in (0, 1]$, let $tQ_0 := [-t, t]^n$.

Since $p\az-1\in(-1,-\frac1q)\subset(-1,0)$,
it follows that $\sum_{\ell=1}^\infty \ell^{1/(p\alpha-1)}<\fz$.
Moreover, we claim that there exist $\{t_k\}_{k\in\nn}\subset(0,1)$
such that
\begin{align}\label{tk}
1=:t_0>t_1>t_2>\cdots>t_k>\cdots
\end{align}
and, for any $k\in\nn$
\begin{align}\label{k-1}
\int_{t_{k-1}Q_{0}\setminus t_{k}Q_{0}} \one_E(x) \,dx
=\frac{|E|}{2}\frac{k^{1/(p\alpha-1)}}{\sum_{\ell=1}^\infty \ell^{1/(p\alpha-1)}}.
\end{align}
Indeed, for any $t\in[0,1]$, let $g(t):=|tQ_0\cap E|$.
Then $g$ is increasing and
continuous by the continuity of the Lebesgue measure.
From this, $g(0)=0$, and $g(1)=|E|>\frac{|E|}{2}>0$,
we deduce that, for any $k\in\nn$,
$$t_k:=\inf\lf\{t\in[0,1]:\
g(t)\ge\frac{|E|}{2}
\frac{\sum_{\ell=k+1}^\infty \ell^{1/(p\alpha-1)}}
{\sum_{\ell=1}^\infty \ell^{1/(p\alpha-1)}}\r\}$$
satisfies \eqref{tk} and, moreover,
\begin{align*}
\int_{t_{k-1}Q_{0}\setminus t_{k}Q_{0}} \one_E(x) \,dx
&=|(t_{k-1}Q_{0}\setminus t_{k}Q_{0})\cap E|
=g(t_{k-1})-g(t_{k})\\
&=\frac{|E|}{2}\frac{\sum_{\ell=k}^\infty \ell^{1/(p\alpha-1)}}
{\sum_{\ell=1}^\infty \ell^{1/(p\alpha-1)}}
-\frac{|E|}{2}\frac{\sum_{\ell=k+1}^\infty \ell^{1/(p\alpha-1)}}
{\sum_{\ell=1}^\infty \ell^{1/(p\alpha-1)}}\\
&=\frac{|E|}{2}\frac{k^{1/(p\alpha-1)}}{\sum_{\ell=1}^\infty \ell^{1/(p\alpha-1)}},
\end{align*}
which shows that \eqref{k-1} holds true.
This finishes the proof of the above claim.

Now, for any given $k\in\nn$,
we can divide $t_{k-1}Q_{0}\setminus t_{k}Q_{0}$
into a family of cubes with pairwise disjoint interiors,
denoted by $\{Q_j^{(k)}\}_{j=1}^{N_k}$
with $N_k\in\nn\cup \{\fz\}$. Then
\[\mathcal{F} := \lf\{Q_j^{(k)}:\ k\in \nn\ {\rm and}\ j\in \{1,\cdots,N_k\}\r\}\]
is a  family of subcubes of $Q_0$ with pairwise disjoint interiors.
Then, by $\onepqa\in(0,1-\frac{p}{q})$, we conclude that
\begin{align*}
&\sum_{Q\in\mathcal{F}}|Q|^{\onepqa} \|\one_E\|^p_{\Lq(Q)}\\
&\quad=\sum_{k=1}^{\infty} \sum_{j=1}^{N_k} |Q_j^{(k)}|^{\onepqa} \|\one_E\|^p_{\Lq(Q_j^{(k)})}
=\sum_{k=1}^{\infty} \sum_{j=1}^{N_k} |Q_j^{(k)}|^{\onepqa} |Q_j^{(k)}\cap E|^{\frac{p}{q}}\\
&\quad\ge\sum_{k=1}^{\infty} \sum_{j=1}^{N_k} |Q_j^{(k)}\cap E|^{\onepqa}
 |Q_j^{(k)}\cap E|^{\frac{p}{q}}
=\sum_{k=1}^{\infty} \sum_{j=1}^{N_k} |Q_j^{(k)}\cap E|^{1-p\alpha}.
\end{align*}
From this, $1-p\alpha\in(\frac1q,1)\subset(0,1)$,
Lemma \ref{lem-eq}(iii), and \eqref{k-1},
it follows that
\begin{align*}
&\sum_{Q\in\mathcal{F}}|Q|^{\onepqa} \|\one_E\|^p_{\Lq(Q)}\\
&\quad\ge  \sum_{k=1}^{\infty} \sum_{j=1}^{N_k} |Q_j^{(k)}\cap E|^{1-p\alpha}
\ge  \sum_{k=1}^{\infty} \left[ \sum_{j=1}^{N_k} |Q_j^{(k)}\cap E|\right]^{1-p\alpha}\\
&\quad= \sum_{k=1}^{\infty}
|(t_{k-1}Q_{0}\setminus t_{k}Q_{0})\cap E|^{1-p\alpha}
= \left[\frac{|E|}{2\sum_{\ell=1}^\infty \ell^{1/(p\alpha-1)}}\right]^{1-p\alpha}
\sum_{k = 1}^{\infty} k^{-1}= \infty,
\end{align*}
which implies that $\|\one_E\|_{\RM(Q_0)} = \infty$.
This finishes the proof of Lemma \ref{lem-1E}.
\end{proof}

\begin{proof}[Proof of Theorem \ref{thm-Q23}(ii)]
Let $p\in[1,\fz)$, $q\in(p,\fz]$, $\az\in(0,\frac1p-\frac1q)$,
and $f\in\RM(Q_0)$.
We show that $\|f\|_{\Lq(Q_0)} = 0$.
Indeed, if $\|f\|_{\Lq(Q_0)} > 0$,
then we claim that there exist $\epsilon,\ \delta\in(0,\fz)$
such that $|E|> \delta > 0$,
where
$$E:=\{x\in Q_0:\ |f(x)|>\ez\}.$$
Otherwise, for any $\ez\in(0,\fz)$, we have
$|\{x\in Q_0:\ |f(x)|>\ez\}|=0$, and hence
$$|\{x\in Q_0:\ |f(x)|>0\}|
=\lf|\bigcup_{k=1}\lf\{x\in Q_0:\ |f(x)|>\frac1k\r\}\r|=0.$$
This implies that $\|f\|_{L^q(Q_0)}=0$
which contradicts to  $\|f\|_{L^q(Q_0)}>0$.
Therefore, the above claim holds true.
By this claim and Lemma \ref{lem-1E}, we obtain
$$\|\one_{E}\|_{\RM(Q_0)} = \infty.$$
Meanwhile, notice that, for functions $f_1$ and  $f_2$ with
$|f_1|\ge|f_2|$, by the definition of $\|\cdot\|_{\RM(Q_0)}$, we have
$$\|f_1\|_{\RM(Q_0)} \ge \|f_2\|_{\RM(Q_0)}.$$
Thus,
$$\|f\|_{\RM(Q_0)} \ge \ez\lf\|\one_{E}\r\|_{\RM(Q_0)} = \infty$$
and hence $f \notin \RM(Q_0)$
which contradicts to the fact that $f\in\RM(Q_0)$.
This shows that $\|f\|_{\Lq(Q_0)} = 0$,
and hence $\RM(Q_0) = \{0\}$,
which completes the proof of Theorem \ref{thm-Q23}(ii)
and hence of Theorem \ref{thm-Q23}.
\end{proof}

\section{Proof of Theorem \ref{thm-Q12}}\label{sec3}
In this section, we first give the proof of Theorem \ref{thm-Q12}(i),
and then prove Theorem \ref{thm-Q12}(ii) via
establishing Propositions \ref{prop-<}, \ref{prop-rn},
and \ref{prop-Q} below.

\begin{proof}[Proof of Theorem \ref{thm-Q12}(i)]
Let $p\in(1,\fz)$, $q\in[1,p)$, and $\alpha\in (\pmq, 0)$.
To prove Theorem \ref{thm-Q12}(i), we need to find two functions
$f_1$ and $f_2$ such that, for any $\tz\in[1,\frac{p}{1-p\az})$,
$$f_1\in L^\tz_{\loc}(\rn)\setminus\RM(\rn)$$
and, for any $\tz\in(\frac{p}{1-p\az},\fz]$,
$$f_2\in L^\tz(\rn)\setminus\RM(\rn).$$

Now, let $f(x):= |x|^{n(\alpha-\frac{1}{p})}$ for any $x\in (0, \infty)^n$.
Let $N\in\nn$ satisfy $N>\sqrt{n}$.
For any $i\in\zz$, let $A_i:=(0,N^i]^n$,
$B_i:= A_{i+1}\setminus A_{i}$,
and $T_i:= [B(\on,N^{i+1})\setminus B(\on,\sqrt{n}N^i)]\cap(0,\fz)^n$.
Let $$\mathbb{S}^{n-1}_+:=\lf\{x=(x_1,\,\dots,\,x_n)\in\rn:\
|x|=1,\ {\rm and}\ x_j\ge0,\ \forall\,j\in\{1,\,\dots,\,n\}\r\}.$$
Then, by some geometrical observations,
for any $i\in\zz$, we have $B_i \supseteq T_i$ and hence
\begin{align}\label{N-n}
\int_{B_i}|f(x)|^q\,dx
\ge&\int_{T_i}|f(x)|^q\,dx
=\int_{T_i}|x|^{qn\alpha-\frac{qn}{p}}\,dx\noz \\
=&\int_{\mathbb{S}^{n-1}_+}\int_{\sqrt{n}N^i}^{N^{i+1}}
r^{qn\alpha-\frac{qn}{p}}r^{n-1}\,dr\,d\sigma
= |\mathbb{S}^{n-1}_+|\int_{\sqrt{n}N^i}^{N^{i+1}}
r^{qn\alpha-\frac{qn}{p}+n-1}\,dr\noz\\
=&\frac{|\mathbb{S}^{n-1}_+|}{qn\alpha-\frac{qn}{p}+n}N^{in(q\alpha-\frac{q}{p}+1)}
\lf(N^{qn\alpha-\frac{qn}{p}+n}-\sqrt{n}^{qn\alpha-\frac{qn}{p}+n}\r).
\end{align}
Observe that, for any $i\in\zz$, the ring-like set $B_i = A_{i+1}\setminus A_i$
consists of $N^n-1$ interior pairwise disjoint subcubes
denoted, respectively, by $\{Q_j^{(i)}\}_{j=1}^{N^n-1}$,
whose side length equals to $N^i$.
Therefore, for any $i\in\zz$, we have
$B_i = \bigcup_{j=1}^{N^n-1}Q_{j}^{(i)}$.
Moreover, for any $i\in\zz$, from $\frac{p}{q}\in[1,\fz)$,
Lemma \ref{lem-eq}(iv), and \eqref{N-n},  we deduce that
\begin{align*}
&\sum_{j=1}^{N^n-1}|Q_{j}^{(i)}|^{1-p\alpha-\frac{p}{q}}\|f\|^p_{L^q(Q_j^{(i)})}\\
&\quad=N^{in(1-p\alpha-\frac{p}{q})}\sum_{j=1}^{N^n-1}\|f\|^{q\frac pq}_{L^q(Q_j^{(i)})}
\ge N^{in(1-p\alpha-\frac{p}{q})}(N^n-1)^{1-\frac{p}{q}}
\lf[\sum_{j=1}^{N^n-1}\|f\|^q_{L^q(Q_j^{(i)})}\r]^{\frac pq} \\
&\quad=(N^n-1)^{1-\frac{p}{q}}N^{in(1-p\alpha-\frac{p}{q})}
\lf[\int_{B_i}|f(x)|^q\,dx\r]^{\frac{p}{q}}\\
&\quad\ge(N^n-1)^{1-\frac{p}{q}}\lf[\frac{|\mathbb{S}^{n-1}_+|
(N^{qn\alpha-\frac{qn}{p}+n}-\sqrt{n}^{qn\alpha-\frac{qn}{p}+n})}
{qn\alpha-\frac{qn}{p}+n}\r]^{\frac{p}{q}}\\
&\quad=:C_0,
\end{align*}
where $C_0$ is a positive constant independent of $i$.
Split $f$ into $f_1 := f\one_{A_0}$ and $f_2 := f\one_{A_0^\complement}$.
Then, by some trivial calculations, we obtain
$$f_1\in L^\tz_{\loc}(\rn)\iff 1\le\tz<\frac{p}{1-p\az}$$
and
$$f_2\in L^\tz(\rn)\iff \frac{p}{1-p\az}<\tz\le\fz.$$
But, we have
\begin{align}\label{f1}
\|f_1\|_{\RM(\rn)}^p
\ge \sum_{i=-\infty}^{-1}
\sum_{j=1}^{N^n-1}|Q_j^{(i)}|^{\onepqa}\|f\|^p_{L^q(Q_j^{(i)})}
\ge\sum_{i=-\infty}^{-1}C_0= \infty
\end{align}
and
$$
\|f_2\|_{\RM(\rn)}^p
\ge \sum_{i=0}^{\fz}
\sum_{j=1}^{N^n-1}|Q_j^{(i)}|^{\onepqa}\|f\|^p_{L^q(Q_j^{(i)})}
\ge\sum_{i=0}^{\fz}C_0= \infty,
$$
which shows that $f_1\notin \RM(\rn)$ and $f_2\notin \RM(\rn)$.
Thus, for any $\tz\in [1, \infty]$ with $\tz\ne \pa$,
$$L^\tz(\rn)\nsubseteq \RM(\rn),$$
which implies that, if $L^{\tz}(\rn)\subset RM_{p,q,\az}(\rn)$,
then $\tz=\frac{p}{1-p\az}$.

Moreover, we also have $f_1\notin \RM(A_0)$
and hence,
for any $\tz\in [1, \frac{p}{1-p\az})$,
$$L^\tz(A_0)\nsubseteq \RM(A_0),$$
which further implies that, if
$L^\tz(A_0)\subset RM_{p,q,\az}(A_0)$,
then $\tz\in[\frac{p}{1-p\az},\fz]$.
Using the technique of translation and dilation,
we find that the above conclusion holds true with $A_0$
replaced by any given cube $Q_0$ of $\rn$.
This finishes the proof of Theorem \ref{thm-Q12}(i).
\end{proof}

Next, we prove Theorem \ref{thm-Q12}(ii)
via first establishing the following embedding.

\begin{proposition}\label{prop-<}
Let $p\in(1,\fz)$, $q\in[1,p)$, and $\az\in(\frac1p-\frac1q,0)$.
Then
$$L^{\frac{p}{1-p\az}}(\cx)\subset RM_{p,q,\az}(\cx)
\quad{\rm and}\quad\|\cdot\|_{RM_{p,q,\az}(\cx)}
\le \|\cdot\|_{L^{\frac{p}{1-p\az}}(\cx)}.$$
\end{proposition}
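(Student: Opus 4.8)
The plan is to reduce the Riesz--Morrey norm, cube by cube, to an $L^{\pa}$ quantity by Hölder's inequality, and then to sum using the superadditivity estimate in Lemma~\ref{lem-eq}(i).

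First I would set $\tz:=\pa$ and record the two elementary facts forced by the hypotheses $q\in[1,p)$ and $\az\in(\pmq,0)$: namely $q<\tz<p$, where the inequality $\tz>q$ is equivalent to $\az>\pmq$ (since $\frac{p}{1-p\az}>q\iff\frac1q-\frac1p+\az>0$) and $\tz<p$ is equivalent to $\az<0$; and the key identity $\frac p\tz=p(\frac1p-\az)=1-p\az$. Since $q<\tz<p$ and $q\ge1$, we have $\tz\in(1,\fz)$, and $L^{\tz}_{\loc}(\cx)\subset L^q_{\loc}(\cx)$, so that membership in $\RM(\cx)$ is meaningful for any $f\in L^{\tz}(\cx)$.

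Next, I would fix $f\in L^{\tz}(\cx)$ and an arbitrary collection $\{Q_i\}_i$ of subcubes of $\cx$ with pairwise disjoint interiors. For each $i$, applying Hölder's inequality on $Q_i$ with exponents $\tz/q$ and its conjugate gives $\|f\|_{\Lq(Q_i)}\le|Q_i|^{\frac1q-\frac1\tz}\|f\|_{L^{\tz}(Q_i)}$, whence
$$|Q_i|^{\onepqa}\|f\|_{\Lq(Q_i)}^p\le|Q_i|^{\onepqa+p(\frac1q-\frac1\tz)}\|f\|_{L^{\tz}(Q_i)}^p=|Q_i|^{1-p\az-\frac p\tz}\|f\|_{L^{\tz}(Q_i)}^p=\|f\|_{L^{\tz}(Q_i)}^p,$$
where the last equality uses $\frac p\tz=1-p\az$. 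Summing over $i$, then applying Lemma~\ref{lem-eq}(i) with $\gz:=\frac p\tz\in[1,\fz)$ to the nonnegative sequence $\{\int_{Q_i}|f(x)|^{\tz}\,dx\}_i$, and using that the $Q_i$ have pairwise disjoint interiors (hence overlap only on a null set), I obtain
$$\sum_i|Q_i|^{\onepqa}\|f\|_{\Lq(Q_i)}^p\le\sum_i\lf[\int_{Q_i}|f(x)|^{\tz}\,dx\r]^{\frac p\tz}\le\lf[\sum_i\int_{Q_i}|f(x)|^{\tz}\,dx\r]^{\frac p\tz}\le\lf[\int_{\cx}|f(x)|^{\tz}\,dx\r]^{\frac p\tz}=\|f\|_{L^{\tz}(\cx)}^p.$$
Taking the supremum over all admissible collections $\{Q_i\}_i$ and then the $p$-th root yields $\|f\|_{\RM(\cx)}\le\|f\|_{L^{\tz}(\cx)}$, which gives both $f\in\RM(\cx)$ and the claimed norm inequality.

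I do not expect a serious obstacle here: the whole argument is Hölder's inequality plus one superadditivity estimate, and the only delicate point is the bookkeeping of exponents, which collapses precisely because $\tz=\pa$ is chosen so that $1-p\az-\frac p\tz=0$. The case $q=1$ is covered verbatim, and the argument is identical whether $\cx=\rn$ or $\cx$ is a cube, so no separate treatment is needed.
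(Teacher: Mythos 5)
Your proof is correct and follows essentially the same route as the paper: a single application of H\"older's inequality on each cube, with the exponents chosen so that the power of $|Q_i|$ cancels exactly, followed by the superadditivity estimate of Lemma~\ref{lem-eq}(i) with exponent $1-p\az=\frac{p}{\tz}\in(1,\frac pq)$. The paper phrases the H\"older step directly in terms of $\int_{Q_i}|f|^{\pa}$ rather than isolating the norm inequality $\|f\|_{L^q(Q_i)}\le|Q_i|^{\frac1q-\frac1\tz}\|f\|_{L^{\tz}(Q_i)}$ first, but the computation is identical.
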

\begin{proof}
Let $p,\ q$, and $\alpha$ be as in this proposition.
For any cube $Q$ of $\rn$, by $\pa\in (q, p)$ and
the H\"older inequality, we conclude that
\begin{align}\label{holder1}
|Q|^{\onepqa}\|f\|^p_{\Lq(Q)}
&=|Q|^{\onepqa}\lf[\int_{Q}|f(x)|^q \,dx\r]^{\frac{p}{q}}
\le\lf[\int_{Q}|f(x)|^{\pa} \,dx\r]^{1-p\az}.
\end{align}
Thus, for any collection of subcubes $\{Q_i\}_i$ of $\cx$
with pairwise disjoint interiors,
by \eqref{holder1}, $1-p\az\in(1,\frac{p}{q})$,
and Lemma \ref{lem-eq}(i), we have
\begin{align*}
\sum_i |Q_i|^{\onepqa}\|f\|^p_{\Lq(Q_i)}
&\le \sum_i \lf[\int_{Q_i}|f(x)|^{\pa} \,dx\r]^{1-p\az}\\
&\le \lf[\sum_i \int_{Q_i}|f(x)|^{\pa} \,dx\r]^{1-p\az}
\le \lf[ \int_{\cx}|f(x)|^{\pa} \,dx\r]^{1-p\alpha},
\end{align*}
which implies that
$$\|f\|_{RM_{p,q,\az}(\cx)}
\le \|f\|_{L^{\frac{p}{1-p\az}}(\cx)}$$
and hence $L^{\frac{p}{1-p\az}}(\cx)\subset RM_{p,q,\az}(\cx)$.
This finishes the proof of Proposition \ref{prop-<}.
\end{proof}

\begin{remark}\label{rem-q}
In Proposition \ref{prop-<}, the index of the Lebesgue space, namely,
$\frac{p}{1-p\az}$, is independent of $q$.
Moreover, if we fix $p$ and $\az$, then Riesz--Morrey spaces are
monotonic with respect to $q$.  To be precise,
let $p\in (1, \infty), q\in [1, p)$, and $\alpha\in (\pmq, 0)$.
Then, for any $\beta\in (q, p)$, we claim that
\[RM_{p, \beta, \alpha}(\cx)\subseteq \RM(\cx).\]
Indeed, for any given collection of subcubes $\{Q_i\}$ of $\cx$
with pairwise disjoint interiors,
by $\beta > q$ and the H\"older inequality, we conclude that
\[
\begin{aligned}
\sum_i |Q_i|^{\onepqa}\|f\|^p_{\Lq(Q_i)}
&=\sum_i |Q_i|^{1-p\alpha-\frac{p}{q}} \lf[\int_{Q_i}|f(x)|^q \,dx\r]^{\frac{p}{q}}\\
&\le\sum_i |Q_i|^{1-\frac{p}{\bz}-p\alpha} \lf[\int_{Q_i}|f(x)|^\bz \,dx\r]^{\frac{p}{\bz}}
\end{aligned}
\]
and hence $\|f\|_{RM_{p, \beta, \alpha}(\cx)} \ge \|f\|_{\RM(\cx)}$.
Thus, $RM_{p, \beta, \alpha}(\cx)\subseteq \RM(\cx)$,
which shows that the above claim holds true.
\end{remark}

Now, we show the proper inclusion on $\rn$
via constructing a sparse function
associated with a family of cubes
with exponential gaps.

\begin{proposition}\label{prop-rn}
Let $p\in(1,\fz)$, $q\in[1,p)$, and $\az\in(\frac1p-\frac1q,0)$.
Then there exists an $f\in RM_{p,q,\az}(\rn)\setminus L^{\frac{p}{1-p\az}}(\rn)$.
\end{proposition}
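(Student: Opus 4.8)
The plan is to realize the heuristic sketched in the Introduction: build a ``spread-out'' copy of the non-integrable constant function by chopping a neighbourhood of $\fz$ into countably many unit cubes separated by exponentially growing gaps. Throughout, put $\tz:=\pa$; from $p\in(1,\fz)$, $q\in[1,p)$, and $\az\in(\frac1p-\frac1q,0)$ one extracts the elementary facts $q<\tz<p$, $\,1-p\az=p/\tz>1$, and $\bz:=\onepqa\in(1-\frac pq,0)$, all used repeatedly. For the construction I would set $R_\ell:=2^\ell$ for $\ell\in\nn$ (any sufficiently rapidly increasing sequence works --- only faster-than-polynomial growth matters) and let $P_\ell$ be the closed unit cube of $\rn$ centered at $(R_\ell,0,\dots,0)$. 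Then the $P_\ell$ have pairwise disjoint interiors, and for $\ell<\ell'$ any cube meeting both $P_\ell$ and $P_{\ell'}$ has side length at least $R_{\ell'}-R_{\ell'-1}-1\ge2^{\ell'-2}$ (project onto the first coordinate axis). Take
$$f:=\sum_{\ell=1}^\fz\ell^{-1/\tz}\,\one_{P_\ell}.$$
Since any bounded set meets only finitely many $P_\ell$, we have $f\in L^q_\loc(\rn)$; on the other hand $\|f\|_{\Lpa(\rn)}^{\tz}=\sum_{\ell}\ell^{-1}|P_\ell|=\sum_\ell\ell^{-1}=\fz$, so $f\notin\Lpa(\rn)$. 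Everything then reduces to proving $\|f\|_{\RM(\rn)}<\fz$: fix an arbitrary collection $\{Q_i\}_i$ of cubes of $\rn$ with pairwise disjoint interiors, discard those with $f\one_{Q_i}=0$ a.e., and split the rest into the \emph{small} cubes $\mathcal S$ (meeting exactly one $P_\ell$) and the \emph{large} cubes $\mathcal L$ (meeting at least two of the $P_\ell$).

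\emph{Small cubes.} If $Q\in\mathcal S$ meets $P_\ell$, then $\|f\|_{L^q(Q)}^p=\ell^{-p/\tz}|P_\ell\cap Q|^{p/q}$, and since $\bz<0$ and $|P_\ell\cap Q|\le|Q|$,
$$|Q|^{\bz}\|f\|_{L^q(Q)}^p\le\ell^{-p/\tz}|P_\ell\cap Q|^{\bz+p/q}=\ell^{-p/\tz}|P_\ell\cap Q|^{1-p\az}.$$
For each fixed $\ell$, disjointness of the interiors of the $Q$'s gives $\sum_{Q\in\mathcal S}|P_\ell\cap Q|\le|P_\ell|=1$, so summing and applying Lemma \ref{lem-eq}(i) with the exponent $1-p\az\ge1$ yields
$$\sum_{Q\in\mathcal S}|Q|^{\bz}\|f\|_{L^q(Q)}^p\le\sum_{\ell=1}^\fz\ell^{-p/\tz}\lf(\sum_{Q\in\mathcal S}|P_\ell\cap Q|\r)^{1-p\az}\le\sum_{\ell=1}^\fz\ell^{-p/\tz}<\fz,$$
the last series converging because $p/\tz=1-p\az>1$.

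\emph{Large cubes.} For $Q\in\mathcal L$ let $M(Q):=\max\{\ell:P_\ell\cap Q\ne\emptyset\}\ (\ge2)$. Since $Q$ meets $P_{M(Q)}$ together with some $P_m$, $m<M(Q)$, the gap estimate gives $|Q|\ge2^{n(M(Q)-2)}$, while crudely $\|f\|_{L^q(Q)}^q=\sum_{\ell\le M(Q),\,P_\ell\cap Q\ne\emptyset}\ell^{-q/\tz}|P_\ell\cap Q|\le M(Q)$. Hence, writing $\Phi(M):=2^{n\bz(M-2)}M^{p/q}$ --- a function that, because $n\bz<0$, is eventually decreasing with $\sum_{M}\Phi(M)<\fz$ --- we get $|Q|^{\bz}\|f\|_{L^q(Q)}^p\le\Phi(M(Q))$. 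The remaining ingredient is geometric (the ``monotone sequence'' step promised in the Introduction): I would show that for each $M$ at most a dimensional constant $C_n$ of the pairwise-disjoint cubes can satisfy $M(Q)=M$, because each such cube meets the unit cube $P_M$ and has side length $\ge2^{M-2}\ge1$, so it contains a unit subcube lying in the bounded $\sqrt n$-neighbourhood of $P_M$, and pairwise-disjoint unit cubes in a bounded set number at most $C_n$. Enumerating $\mathcal L=\{Q^{(k)}\}_k$ with $M(Q^{(k)})$ nondecreasing then forces $M(Q^{(k)})\ge\lceil k/C_n\rceil$, so (replacing $\Phi$ by its non-increasing majorant if necessary)
$$\sum_{Q\in\mathcal L}|Q|^{\bz}\|f\|_{L^q(Q)}^p\le\sum_k\Phi\lf(M(Q^{(k)})\r)\le C_n\sum_{M}\Phi(M)<\fz.$$
Adding the small- and large-cube bounds shows $\|f\|_{\RM(\rn)}^p$ is finite uniformly in $\{Q_i\}_i$, hence $f\in\RM(\rn)\setminus\Lpa(\rn)$, as desired.

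\emph{Main obstacle.} The hard part is the large-cube estimate, and within it the ``monotone rearrangement'': bounding by a dimensional constant the number of pairwise-disjoint large cubes that share a given value $M(Q)$, which is exactly what turns $\{\Phi(M(Q^{(k)}))\}_k$ into a summable sequence dominated by $C_n\,\Phi(\lceil k/C_n\rceil)$. This is also the reason the gaps must grow faster than any polynomial: the only bound on $\|f\|_{L^q(Q)}^p$ available for a large cube is the crude $M(Q)^{p/q}$, and the polynomial-in-$M$ factor it produces has to be absorbed by the exponential decay of $2^{n\bz(M-2)}$; everything else is routine bookkeeping.
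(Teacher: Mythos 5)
Your proof is correct, and it follows the same overall strategy as the paper (a sparse family placed at the dyadic positions $2^\ell$, a split of an arbitrary disjoint family into ``small'' cubes meeting one $P_\ell$ and ``large'' cubes meeting at least two, and an exponential-beats-polynomial summation for the large ones), but it differs in two genuine ways. First, the witness function itself: the paper keeps height $1$ and shrinks the cubes, taking $P_\ell=[2^\ell,2^\ell+\ell^{-1/n}]^n$ of volume $1/\ell$ and $f=\sum_\ell\one_{P_\ell}$, whereas you keep unit cubes and shrink the heights to $\ell^{-1/\tz}$; both make $\int|f|^{\tz}$ the harmonic series, and the small-cube estimates are essentially parallel (both reduce to $\sum_\ell\ell^{-(1-p\az)}<\fz$ via the superadditivity inequality). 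Second, and more substantively, the counting of large cubes: the paper proves that the index ranges $\{n_k,\dots,m_k\}$ of the large cubes can be rearranged to satisfy $n_1<m_1\le n_2<m_2\le\cdots$ (so there is at most \emph{one} large cube per starting index, forcing $n_k\ge k$ and giving a geometric series $\sum_k 2^{n_k n\bz}$), while you instead bound by a dimensional constant $C_n\le 3^n$ the number of disjoint large cubes sharing a given maximal index $M$, via a volume-packing argument (each such cube has side length $\ge 1$ and contains a unit subcube inside a fixed $3^n$-volume neighbourhood of $P_M$). Your packing argument is more robust and somewhat simpler --- it does not require the delicate ``at most one cube per index'' interval-disjointness claim --- at the cost of a dimensional constant; the paper's rearrangement is sharper but needs the careful chain of containments $Q_{(k)}\supsetneqq B_{n_k,m_k}$. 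One cosmetic point: your parenthetical that ``only faster-than-polynomial growth matters'' is not quite what your own estimate shows (a sufficiently high polynomial rate for $R_\ell$ would also absorb the $M^{p/q}$ factor), but since you fix $R_\ell=2^\ell$ this does not affect the proof.
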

\begin{proof}
Let $p,\ q$, and $\az$ be as in this proposition.
For any $\ell\in\nn$, let $P_\ell:= [2^\ell,2^\ell+\ell^{-1/n}]^n$.
Then $\{P_\ell\}_{\ell\in\nn}$ is obviously a collection of disjoint subcubes of $\rn$.
Let $$f:=\sum_{\ell\in\nn}\one_{P_\ell}.$$
Then $f$ is bounded and hence $f\in L^q_{\textrm{loc}}(\rn)$.
Moreover, since
$$\|f\|_{\Lpa(\rn)}^{\pa}=\sum_{\ell\in\nn}|P_\ell|
=\sum_{\ell\in\nn}\frac1\ell=\fz,$$
we have $f\notin \Lpa(\rn)$.
Thus, to prove this proposition, it remains to show that $f\in\RM(\rn)$.

In what follows, we use $Q^\circ$ to denote the \emph{interior} of $Q$.
For any given collection of subcubes $\cf:=\{Q_i\}_i$ of $\rn$
with pairwise disjoint interiors,
we split it into three subcollections as follows:
\begin{itemize}
\item[{\rm(i)}]
$\cf_1:=\lf\{Q\in\cf:\
Q^\circ\ {\rm intersects}\ \ge2{\rm\ elements\ of\ }
\{P_\ell\}_{\ell\in \nn}\r\}$,

\item[{\rm(ii)}]
$\cf_2:=\lf\{Q\in\cf:\
Q^\circ\ {\rm intersects}\ 1 {\rm\ element\ of\ }
\{P_\ell\}_{\ell\in \nn}\r\}$,

\item[{\rm(iii)}]
$\cf_3:=\lf\{Q\in\cf:\
Q^\circ\ {\rm intersects\ no\ element\ of\ }
\{P_\ell\}_{\ell\in \nn}\r\}$.
\end{itemize}
From this and the definition of $f$, it follows that
\begin{align}\label{I1I2}
&\sum_{Q\in\cf} |Q|^{\onepqa}\|f\|^p_{\Lq(Q)}\noz\\
&\quad= \sum_{Q\in\cf_1} |Q|^{\onepqa}\|f\|^p_{\Lq(Q)}
+\sum_{Q\in\cf_2} \cdots
+\sum_{Q\in\cf_3} \cdots\noz\\
&\quad=\sum_{Q\in\cf_1} |Q|^{\onepqa}\|f\|^p_{\Lq(Q)}+
\sum_{Q\in\cf_2} \cdots\noz\\
&\quad=:{\rm I}_1+{\rm I}_2.
\end{align}

We first estimate ${\rm I}_2$. For any $Q\in\cf_2$,
$Q^\circ$ intersects only one element
of $\{P_\ell\}_{\ell\in \nn}$,
denoted by $P_Q$.
By this, $\onepqa\in(1-\frac{p}{q},0)$, $1-p\az\in(1,\frac{p}{q})$,
and the disjointness of $\{{Q}_{i}^\circ\}_i$,
we conclude that
\begin{align}\label{I2}
{\rm I}_2 &=  \sum_{Q\in\cf_2}|Q|^{\onepqa}\|f\|^p_{\Lq(Q)}
\le \sum_{Q\in\cf_2} |Q\cap P_Q|^{\onepqa}
 |Q\cap P_Q|^{\frac{p}{q}}\noz\\
&=\sum_{Q\in\cf_2}|Q\cap P_Q|^{1-p\az}
\le\sum_{\ell\in\nn} \sum_{i} |{Q}_{i}\cap P_\ell|^{1-p\az}
\le \sum_{\ell\in\nn} \lf(\sum_{i} |{Q}_{i}\cap P_\ell|\r)^{1-p\az}\noz\\
&\le \sum_{\ell\in\nn} |P_\ell|^{1-p\az}
=\sum_{\ell\in\nn}\frac{1}{\ell^{1-p\az}}<\fz.
\end{align}
This is a desired estimate of ${\rm I}_2$.

Next, we estimate ${\rm I}_1$ via rearranging cubes in $\cf_1$.
To this end,
we first claim that, if cube $Q$ of $\rn$ satisfies that
\begin{align}\label{DlDm}
Q^\circ\cap P_\ell\neq\emptyset
\quad{\rm and}\quad
Q^\circ\cap P_m\neq\emptyset
\end{align}
with $1\le\ell<m$, then
\begin{align}\label{QBlm}
Q\supsetneqq B_{\ell,m},
\end{align}
here and thereafter, for any $\ell,\ m\in\nn$ with $\ell<m$,
$$B_{\ell,m}:= [2^\ell+\ell^{-1/n}, 2^m]^n.$$
Indeed, let $Q=:[a_1,b_1]\times\cdots\times[a_n,b_n]$.
Then \eqref{DlDm} shows that there exist
$x:=(x_1,\,\cdots,\,x_n)\in Q^\circ\cap P_\ell$
and $y:=(y_1,\,\cdots,\,y_n)\in Q^\circ\cap P_m$.
Therefore, for any $j\in\{1,\,\dots,\,n\}$,
$$a_j< x_j\le2^{\ell}+\ell^{-1/n}
\quad{\rm and}\quad
b_j> y_j\ge2^m,$$
which further implies that
$[a_j,b_j]\supsetneqq[2^\ell+\ell^{-1/n}, 2^m]$
and hence \eqref{QBlm} holds true.
This finishes the proof of the above claim.

Now, we rearrange cubes in $\cf_1$.
If there exists some cube $Q\in\cf_1$ such that
$Q^\circ\cap P_1\neq\emptyset$ and
$Q^\circ\cap P_m\neq\emptyset$ for some
$m\in\{2,\dots\}$,
then denote this cube $Q$ by $\widehat{Q}_{(1)}$.
We claim that $\widehat{Q}_{(1)}^\circ\cap P_2\neq\emptyset$.
Indeed, if $m=2$, then the conclusion obviously holds true.
If integer $m>2$, then, using \eqref{QBlm} and a simple
geometrical observation, we obtain
$$\widehat{Q}_{(1)}\supsetneqq B_{1,m}\supsetneqq P_2,$$
which implies that the above claim holds true.
From this claim, the definition of $\widehat{Q}_{(1)}$,
and \eqref{QBlm} again, we deduce that
$$\widehat{Q}_{(1)}\supsetneqq B_{1,2}.$$
Similarly, if there exists another $\widetilde{Q}\in\cf_1$
such that
$\widetilde{Q}^\circ\cap P_1\neq\emptyset$ and
$\widetilde{Q}^\circ\cap P_m\neq\emptyset$ for some
$\widetilde{m}\in\{2,\dots\}$,
then we also have
$$\widetilde{Q}\supsetneqq B_{1,2}$$
and hence
$$\lf(\widetilde{Q}\cap \widehat{Q}_{(1)}\r)\supsetneqq B_{1,2},$$
which contradicts to the fact that
$\widetilde{Q}^\circ\cap \widehat{Q}_{(1)}^\circ=\emptyset$.
Thus, there exists no more than one cube
$\widehat{Q}_{(1)}\in\cf_1$ such that
$\widehat{Q}_{(1)}^\circ\cap P_1\neq\emptyset$ and
$\widehat{Q}_{(1)}^\circ\cap P_{m_1}\neq\emptyset$ for some
$m_1\in\{2,\dots\}$.
If such cube does not exist, then we let
$\widehat{Q}_{(1)}:=\emptyset$.
Moreover, if there exists some cube $Q\in\cf_1\setminus\{\widehat{Q}_{(1)}\}$
such that $Q^\circ\cap P_2\neq\emptyset$ and
$Q^\circ\cap P_m\neq\emptyset$ for some
$m\in\{3,\dots\}$,
then denote this cube $Q$ by $\widehat{Q}_{(2)}$.
Similarly, we know that
there exists no more than one cube
$\widehat{Q}_{(2)}\in\cf_1$ such that
$\widehat{Q}_{(2)}^\circ\cap P_2\neq\emptyset$ and
$\widehat{Q}_{(2)}^\circ\cap P_m\neq\emptyset$ for some
${m_2}\in\{3,\dots\}$.
Repeating the above procedure,
we obtain $\{\widehat{Q}_{(j)}\}_{j\in\nn}=\cf_1$.
Furthermore, we choose a subsequence of
$\{\widehat{Q}_{(j)}\}_{j\in\nn}$, denoted by
$\{\widehat{Q}_{(j_k)}\}_{k=1}^{K}$ with
$$K:=\sharp\cf_1,$$
such that $\widehat{Q}_{(j_k)}\neq\emptyset$ for any $k\in\{1,\,\dots,\,K\}$,
and $j_1<j_2<\cdots$.
Now, for any $k\in\{1,\,\dots,\,K\}$, let
$$Q_{(k)}:=\widehat{Q}_{(j_k)}.$$
Then we have
$\lf\{Q_{(k)}\r\}_{k=1}^{K}
=\lf\{\widehat{Q}_{(j_k)}\r\}_{k=1}^{K}
=\lf\{\widehat{Q}_{(j)}\r\}_{j\in\nn}=\cf_1$.
To sum up, $\{Q_{(k)}\}_{k=1}^{K}$ is the desired rearrangement of $\cf_1$.

For any $k\in\nn$, by a simple geometrical observation,
we find that there exist
$n_k,\ m_k\in\nn$ with
\begin{align}\label{nkmk}
n_k<m_k
\end{align}
such that
$$\lf\{l\in\nn:\ Q_{(k)}^\circ\cap P_l\neq\emptyset \r\}
=:\{n_k,n_k+1,\,\dots,\,m_k\},$$
which, together with \eqref{QBlm}, further implies that
\begin{align}\label{QkB}
Q_{(k)}\supsetneqq B_{n_k,m_k}.
\end{align}
Moreover, we claim that
\begin{align}\label{mono}
n_1< m_1\le n_2< m_2\le \cdots < m_{k-1}\le n_k< m_k\le n_{k+1}<\cdots.
\end{align}
Without loss of generality, we only need to show $m_1\le n_2$.
Indeed, if $m_1> n_2$, then, from \eqref{nkmk}, it follows that
$$\max\{n_1,n_2\}<\min\{m_1,m_2\}.$$
By this and \eqref{QkB}, we conclude that
$$Q_{(1)}\supsetneqq B_{n_1,m_1}
\supset B_{\max\{n_1,n_2\},\min\{m_1,m_2\}}
$$
and
$$
Q_{(2)}\supsetneqq B_{n_2,m_2}
\supset B_{\max\{n_1,n_2\},\min\{m_1,m_2\}},$$
and hence
$Q_{(1)}\cap Q_{(2)}
\supsetneqq B_{\max\{n_1,n_2\},\min\{m_1,m_2\}}$,
which contradicts to the fact that
$Q_{(1)}^\circ\cap Q_{(2)}^\circ=\emptyset$.
This shows that $m_1\le n_2$, and hence
the above claim holds true.

From \eqref{QkB}, \eqref{mono}, $\onepqa\in(1-\frac{p}{q},0)$,
and the disjointness of $\{Q_{(k)}\}_{k=1}^{K}$,
we deduce that
$${\rm I}_1 =  \sum_{k=1}^K |Q_{(k)}|^{\onepqa} \|f\|^p_{\Lq(Q_{(k)})}
\le \sum_{k=1}^K |B_{n_k,m_k}|^{\onepqa}
\left( \sum_{i=n_k}^{m_k}\frac{1}{i}\right)^{\frac{p}{q}}$$
and hence
\begin{align}\label{I1-1}
{\rm I}_1 &\le\sum_{k=1}^K |B_{n_k,m_k}|^{\onepqa}
\left( \sum_{i=n_k}^{m_k}\frac{1}{i}\right)^{\frac{p}{q}}\noz\\
&=\sum_{k=1}^K \lf[2^{m_k}-2^{n_k}-({n_k})^{-\frac{1}{n}}\r]^{n(\onepqa)}
\left( \sum_{i=n_k}^{m_k}\frac{1}{i}\right)^{\frac{p}{q}}\noz\\
&=\sum_{k=1}^K 2^{n_kn(\onepqa)}\lf[2^{m_k-n_k}-1-2^{-n_k}({n_k})^{-\frac{1}{n}}\r]^{n(\onepqa)}
\left( \sum_{i=n_k}^{m_k}\frac{1}{i}\right)^{\frac{p}{q}}\noz\\
&=\sum_{k=1}^K 2^{n_kn(\onepqa)} c_k,
\end{align}
where
$$c_k:=\lf[2^{m_k-n_k}-1-2^{-n_k}({n_k})^{-\frac{1}{n}}\r]^{n(\onepqa)}
\left( \sum_{i=n_k}^{m_k}\frac{1}{i}\right)^{\frac{p}{q}}.$$
For any $k\in\{1,\,\dots,\,K\}$, by $n_k\ge1$,
we have $2^{-n_k}{n_k}^{-\frac{1}{n}}< \frac{1}{2}$,
which, combined with \eqref{nkmk}, further implies that
\[
\begin{aligned}
c_k =& \lf[2^{m_k-n_k}-1-2^{-n_k}({n_k})^{-\frac{1}{n}}\r]^{n(\onepqa)}
\left( \sum_{i=n_k}^{m_k}\frac{1}{i}\right)^{\frac{p}{q}}\\
<& \lf(2^{m_k-n_k}-\frac{3}{2}\r)^{n(\onepqa)} (m_k-n_k)^{\frac{p}{q}}
\le\|g\|_{L^\fz([1,\fz))}<\fz,
\end{aligned}
\]
where
\begin{align}\label{g-domin}
g(t):=\frac{t^\frac{p}{q}}{(2^t-\frac32)^{n(\frac{p}{q}+p\alpha-1)}},
\quad \forall\,t\in[1,\fz).
\end{align}
By this and \eqref{I1-1}, we conclude that
\begin{align}\label{I1}
{\rm I}_1 \le& \sum_{k=1}^K c_k 2^{n_kn(\onepqa)}
\le \|g\|_{L^\fz([1,\fz))} \sum_{k=1}^K 2^{n_kn(\onepqa)}\noz\\
\le& \|g\|_{L^\fz([1,\fz))} \sum_{k=1}^\infty 2^{kn(\onepqa)}
< \infty.
\end{align}
This is a desired estimate of I$_1$.

From \eqref{I1I2}, \eqref{I2}, \eqref{I1}, and the arbitrariness of
$\{Q_i\}_i$, it follows that
\[
\|f\|_{\RM(\rn)}
= \sup\lf[ \sum_i |Q_i|^{\onepqa}\|f\|^p_{\Lq(Q_i)}\r]^{\frac{1}{p}}
< \infty
\]
and hence $f\in \RM(\rn)$.
Therefore, $f\in RM_{p,q,\az}(\rn)\setminus L^{\frac{p}{1-p\az}}(\rn)$,
which completes the proof of Proposition \ref{prop-rn}.
\end{proof}

\begin{remark}\label{rem-rn}
Let $p\in(1,\fz)$, $q\in[1,p)$, and $\az\in(\frac1p-\frac1q,0)$.
It is easy to show that the function $f$,
in the proof of Proposition \ref{prop-rn},
does not belong to the \emph{weak Lebesgue space}
$$L^{\frac{p}{1-p\az},\fz}(\rn):=\lf\{h\ {\rm is\ measurable\ on\ }\rn:\
\|h\|_{L^{\frac{p}{1-p\az},\fz}(\rn)}<\fz\r\},$$
where
$$\|h\|_{L^{\frac{p}{1-p\az},\fz}(\rn)}:=
\sup_{\lz\in(0,\fz)}\lf[\lz\lf|\lf\{x\in\rn:\
|h(x)|>\lz\r\} \r|^{\frac1p-\az}\r].$$
Indeed, for any $\lz\in(0,1)$, by the definition of
$f$, we have
$$\lf|\lf\{x\in\rn:\ |f(x)|>\lz\r\} \r|
=\sum_{\ell\in\nn}|P_\ell|
=\sum_{\ell\in\nn}\frac1\ell=\fz,$$
which implies that $\|f\|_{L^{\frac{p}{1-p\az},\fz}(\rn)}=\fz$
and hence $f\notin L^{\frac{p}{1-p\az},\fz}(\rn)$.
From this and the proof of Proposition \ref{prop-rn},
it follows that $f\in RM_{p,q,\az}(\rn)\setminus L^{\frac{p}{1-p\az},\fz}(\rn)$
and hence
$$RM_{p,q,\az}(\rn)\nsubseteq L^{\frac{p}{1-p\az},\fz}(\rn).$$
On the other hand, observe that the function $f_1$,
in the proof of Theorem \ref{thm-Q12}(i),
belongs to $L^{\frac{p}{1-p\az},\fz}(\rn)\setminus RM_{p,q,\az}(\rn)$,
which implies that
$$L^{\frac{p}{1-p\az},\fz}(\rn)\nsubseteq RM_{p,q,\az}(\rn).$$
Thus, the Riesz--Morrey space and the weak Lebesgue space
do not cover each other.
\end{remark}

Obviously, the construction of $f$ on $\rn$ in Proposition \ref{prop-rn}
is no longer feasible on a given cube $Q_0$ of $\rn$.
To obtain the desired function on $Q_0$, we borrow some ideas
from the proof of \cite[Proposition 3.2]{dhky18}.
In what follows, for any sets $E_1,\ E_2\subset\rn$,
the \emph{distance} between $E_1$ and $E_2$
is defined by setting
$$\dist(E_1,E_2):=\lf\{|x-y|:\ x\in E_1,\ y\in E_2\r\}.$$

\begin{proposition}\label{prop-Q}
Let $p\in(1,\fz)$, $q\in[1,p)$, $\az\in(\frac1p-\frac1q,0)$,
and $Q_0$ be any given cube of $\rn$.
Then there exists an
$f\in RM_{p,q,\az}(Q_0)\setminus L^{\frac{p}{1-p\az}}(Q_0)$.
\end{proposition}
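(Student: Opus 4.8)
The plan is to adapt the three-parameter construction from \cite[Proposition 3.2]{dhky18}, working on $Q_0=I_0^n$ where $I_0=(0,1)$ by translation and dilation. First I would fix three sequences: side lengths $\{l_i\}_{i=0}^\fz$, gaps $\{d_i\}_{i=0}^\fz$, and heights $\{h_i\}_{i=0}^\fz$, and set, for each $i$, a cube $R_i\subset Q_0$ of side length $l_i$ whose distance to the corner $\mathbf{0}$ is comparable to $d_i$, with the $R_i$ accumulating at $\mathbf{0}$; then put $f:=\sum_{i=0}^\fz h_i\one_{R_i}$. The non-integrability requirement $f\notin L^{\pa}(Q_0)$ forces $\sum_i h_i^{\pa}\,l_i^n=\fz$, modeled on the divergence of $\int_0^1 x^{(\frac1p-\az)\pa-1}\,dx$ coming from the model function $g(x)=x^{\frac1p-\az}$ in \eqref{f1}. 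The heights should therefore behave like $h_i\sim d_i^{\frac1p-\az}$ (so that $f$ is a sparse version of $g$ near $0$), and the gaps $d_i$ should decrease geometrically, say $d_i\sim 2^{-i}$, so that $R_i$ has room to sit between the sphere of radius $d_{i+1}$ and that of radius $d_i$; the lengths $l_i$ are the free parameter to be chosen \emph{small} enough (sparse enough) to make the Riesz--Morrey norm finite while still keeping $\sum_i h_i^{\pa}l_i^n=\fz$.

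The core of the argument is bounding $\|f\|_{\RM(Q_0)}$. Given any collection $\{Q_i\}_i$ of subcubes of $Q_0$ with pairwise disjoint interiors, I would split it exactly as in Proposition \ref{prop-rn}: those $Q\in\{Q_i\}_i$ whose interior meets at most one $R_\ell$ (the \emph{small} cubes), and those meeting two or more (the \emph{large} cubes). The small-cube sum is handled verbatim as in \eqref{I2}: using $\onepqa\in(1-\frac pq,0)$ and $1-p\az\in(1,\frac pq)$ together with Lemma \ref{lem-eq}(iii) one dominates $\sum |Q\cap R_\ell|^{1-p\az}$ by $\sum_\ell (h_\ell^q|R_\ell|)^{p/q\cdot\frac{1-p\az}{p/q}}$, which reduces to $\sum_\ell h_\ell^{p(1-p\az)/\,\cdots}$ — more precisely $\sum_\ell h_\ell^p l_\ell^{n(1-p\az)}$ up to harmless constants — and this is finite provided $l_\ell$ decays fast enough relative to $h_\ell$. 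For the large cubes the monotone-rearrangement trick must be imported and this is where the real work lies: a large cube $Q$ meeting $R_\ell$ and $R_m$ with $\ell<m$ must contain the shell between the two relevant spheres, hence $Q$ has side length $\gtrsim d_\ell$, while $\|f\|_{L^q(Q)}^q\le\sum_{j\ge\ell}h_j^q l_j^n$. Summing $|Q|^{\onepqa}\|f\|_{L^q(Q)}^p$ over the rearranged large cubes $\{Q_{(k)}\}$, whose associated index pairs $(n_k,m_k)$ satisfy a chain $n_1<m_1\le n_2<m_2\le\cdots$ as in \eqref{mono}, one gets a tail sum of the shape $\sum_k d_{n_k}^{n\onepqa}\big(\sum_{j\ge n_k}h_j^q l_j^n\big)^{p/q}$, and with $d_i\sim 2^{-i}$ and a sufficiently rapidly decaying $\{l_i\}$ this is dominated by a convergent geometric series, just as \eqref{g-domin}–\eqref{I1} dominate $\mathrm I_1$ in the $\rn$ case.

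The main obstacle, flagged in the introduction, is that after choosing parameters different from those in \cite{dhky18} the cubes $R_i$ (and the shells containing them) need \emph{not} be pairwise disjoint, so the clean rearrangement of the ``large'' cubes may fail. To handle this I would introduce the rearrangement technique described as (b)$_1$–(b)$_3$ in the statement: group the $R_i$ into finitely-overlapping families indexed by dyadic annuli around $\mathbf{0}$, absorb the bounded overlap into the implicit constants, and only then run the monotone relabeling \eqref{mono}. Concretely, the plan is (1) reduce to $Q_0=I_0^n$; (2) specify $d_i\sim 2^{-i}$, $h_i\sim d_i^{\frac1p-\az}$, and choose $l_i$ satisfying both $\sum_i h_i^{\pa}l_i^n=\fz$ and $\sum_i h_i^p l_i^{n(1-p\az)}<\fz$ (possible since $\pa>0$ while $1-p\az>1$, so the second exponent on $l_i$ is larger, i.e.\ more summable, forcing only an upper constraint on the decay of $l_i$ that is compatible with the lower constraint from divergence); (3) verify $f\notin L^{\pa}(Q_0)$; (4) prove the small-cube bound; (5) perform the annular grouping and monotone rearrangement for the large cubes and prove the geometric-series bound; (6) conclude $\|f\|_{\RM(Q_0)}<\fz$, so $f\in RM_{p,q,\az}(Q_0)\setminus L^{\pa}(Q_0)$. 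I expect step (5), and in particular making the overlap-absorption in the rearrangement rigorous while keeping the constants independent of $\{Q_i\}_i$, to be the delicate point; everything else is a careful but routine transcription of the $\rn$ argument together with the parameter bookkeeping of \cite{dhky18}.
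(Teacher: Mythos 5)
Your overall architecture is right (a sparse sum of weighted indicator functions accumulating at a point, the divergence condition $\sum_i h_i^{\pa}l_i^n=\fz$ versus the convergence condition $\sum_i h_i^p l_i^{n(1-p\az)}<\fz$, the one-block/many-block dichotomy for the test cubes, and the monotone rearrangement for the ``large'' cubes), but there is a concrete error in the parameters that, as written, makes the construction fail. You take $h_i\sim d_i^{\frac1p-\az}$ with $d_i\sim 2^{-i}$; since $\frac1p-\az>0$, this gives $h_i\to 0$, so $f=\sum_i h_i\one_{R_i}$ is \emph{bounded} on $Q_0$ and hence lies in every $L^r(Q_0)$, in particular in $L^{\pa}(Q_0)$. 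Equivalently, your own divergence requirement is then unachievable: since the $R_i$ are disjoint subsets of $Q_0$, $\sum_i h_i^{\pa}l_i^n\le(\sup_i h_i^{\pa})\sum_i l_i^n\le(\sup_i h_i^{\pa})|Q_0|<\fz$ whenever the heights are bounded. The slip comes from taking the model function literally as $g(x)=x^{\frac1p-\az}$; the singular model actually used in the proof of Theorem \ref{thm-Q12}(i) is $|x|^{n(\az-\frac1p)}$, which blows up at the origin. The heights must therefore grow, $h_i\sim d_i^{\,n(\az-\frac1p)}\to\fz$ (compare the paper's choice $h_i=2^{\frac12(\frac1p-\az)i^2}\to\fz$). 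With that correction your bookkeeping becomes consistent: writing $a_i:=h_i^{\pa}l_i^n$, the two conditions read $\sum_i a_i=\fz$ and $\sum_i a_i^{1-p\az}<\fz$ with $1-p\az>1$, which is satisfiable (e.g.\ $a_i=1/i$), and the large-cube tail sum $\sum_\ell d_\ell^{\,n(\onepqa)}(\sum_{j\ge\ell}h_j^ql_j^n)^{p/q}$ then also converges; but none of this is verified in the proposal, and step (5) is exactly where it must be checked.

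For comparison, the paper does something structurally different: instead of a single sequence of cubes accumulating at one point, it builds a binary tree $\cl=\bigcup_i\cl_i$ with $\sharp\cl_i=2^i$ congruent cubes at level $i$, side lengths and gaps $l_i=d_i=2^{-(i+1)^2/(2n)}$ and heights $h_i=2^{\frac12(\frac1p-\az)i^2}$, so that each level contributes a constant to $\int|f|^{\pa}$. The steps (b)$_1$--(b)$_3$ are not a bounded-overlap absorption as you guessed: they are a finite backward induction that replaces $d_0,\dots,d_{N_0}$ so that $d_i$ exceeds the ``radius'' $D_{i+1}$ of the whole descendant cloud, which is what guarantees the tree is genuinely pairwise disjoint; this issue is created by the branching and would largely disappear in your single-sequence layout. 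Your (corrected) route is therefore arguably simpler, but you would need to supply the quantitative verification of the small-cube bound, the gap-box/monotone-chain argument for the large cubes, and the convergence of the resulting series, none of which is yet carried out.
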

\begin{proof}
Let $p,\ q,$ and $\az$ be as in this proposition.
Without loss of generality
(using the technique of translation and dilation),
we only need to find such an $f$ on
$Q_0:=[-\frac{L_0}2,\frac{L_0}2]^n$
with some given side length $L_0$
determined later.

For any $i\in\zz_+$, let
the length $l_i := 2^{-\frac{(i+1)^2}{2n}}$
and the distance $d_i := 2^{-\frac{(i+1)^2}{2n}}$.
We define a series of collections, $\{\mathcal{L}_k\}_{k\in\zz_+}$,
with $\mathcal{L}_k$ be the collection of disjoint subcubes
of $Q_0$ for any $k\in\zz_+$, step by step as follows.
\begin{enumerate}
\item[{\rm(a)}$_1$]
Let $\Delta:=\{(x_1,\,\dots,\,x_n)\in\rn:\ x_1=\cdots=x_n\}$;
see the dotted line in the figure below.
All new cubes chosen
in the following step should keep their
left and lower vertices as well as their right and upper vertices
lying on $\Delta$.

\item[{\rm(a)}$_2$]
Let $I_0$ be an open cube with side length $l_0$
and centered at $\mathbf{0}$,
and $\mathcal{L}_0 := \{I_0\}$.

\item[{\rm(a)}$_3$]
For any $i\in\nn$, we define $\mathcal{L}_i$ via $\mathcal{L}_{i-1}$.
Precisely, for any $I\in\mathcal{L}_{i-1}$,
we choose two new open cubes $I'$ on the two side of $I$,
satisfying that $\dist(I, I') = \sqrt{n}d_{i-1}$
and the length of $I'$ is $l_{i}$.
These two $I'$ are called to be the \emph{children} of $I$.
Then
$$\mathcal{L}_i:=\lf\{{\rm the\ children\ of\ }I:\ I\in\cl_{i-1} \r\},$$
and it is also easy to show that
\begin{align}\label{sharpIi}
\sharp{\cl_i}=2^i;
\end{align}
see the figure below for $\mathcal{L}_0 = \{I_0\}$,
$\mathcal{L}_1:=\{I_1,I_1'\}$,
and $\cl_2:=\{I_2,I_2',\widetilde{I}_2,\widetilde{I}_2'\}$
(and also \cite[Fig. 1]{dhky18} for a similar construction
on the interval).

\begin{center}
\begin{tikzpicture} [scale = 2.3]
\draw [thick, ->=5pt] (2,5)--(8,5);
\draw [thick, ->=5pt] (5,2)--(5,8);
\draw [dashed] (2,2)--(8,8);
	
\draw[thick] (4.58,4.58)--(4.58,5.42)--(5.42,5.42)--(5.42,4.58)--(4.58,4.58);
{\red\draw[thick] (3.24,3.24)--(3.24,3.74)--(3.74,3.74)--(3.74,3.24)--(3.24,3.24);}
{\red\draw[thick] (6.26,6.26)--(6.26,6.76)--(6.76,6.76)--(6.76,6.26)--(6.26,6.26);}
{\blue\draw[thick] (2.53,2.53)--(2.53,2.74)--(2.74,2.74)--(2.74,2.53)--(2.53,2.53);}
{\blue\draw[thick] (4.24,4.24)--(4.24,4.45)--(4.45,4.45)--(4.45,4.24)--(4.24,4.24);}
{\blue\draw[thick] (5.55,5.55)--(5.55,5.76)--(5.76,5.76)--(5.76,5.55)--(5.55,5.55);}
{\blue\draw[thick] (7.26,7.26)--(7.26,7.47)--(7.47,7.47)--(7.47,7.26)--(7.26,7.26);}
	
\draw [decoration={brace,mirror,raise=3pt},decorate] (5.42,5.42) -- node[below right] {$\sqrt{n}d_0$} (6.26,6.26);
\draw [decoration={brace,mirror,raise=3pt},decorate] (6.76,6.76) -- node[below right] {$\sqrt{n}d_1$} (7.26,7.26);
\draw [decoration={brace,raise=2pt},decorate] (5.76,5.76) -- node[above left] {$\sqrt{n}d_1$} (6.26,6.26);
\draw [decoration={brace,mirror,raise=3pt},decorate] (3.74,3.74) -- node[below right] {$\sqrt{n}d_0$} (4.58,4.58);
\draw [decoration={brace,raise=2pt},decorate] (3.74,3.74) -- node[above left] {$\sqrt{n}d_1$} (4.24,4.24);
\draw [decoration={brace,mirror,raise=3pt},decorate] (2.74,2.74) -- node[below right] {$\sqrt{n}d_1$} (3.24,3.24);
	
\node [above left] at (4.68,5.39) {$I_0$};
\node [above left] at (6.36,6.72) {$I_1$};
\node [above left] at (3.35,3.71) {$I_1'$};
\node [above left] at (7.355,7.44) {$I_2$};
\node [below right] at (5,5) {$\mathbf{0}$};
\node at (5,5) {$\bullet$};
\node [above left] at (5.655,5.73) {$I_2'$};
\node [above left] at (2.635,2.72) {$\widetilde{I}_2'$};
\node [above left] at (4.345,4.42) {$\widetilde{I}_2$};
\end{tikzpicture}
\end{center}

\item[{\rm(a)}$_4$]
Let $\mathcal{L}:=\bigcup_{i\in\mathbb{Z}_+}\mathcal{L}_i$.
\end{enumerate}

We next show that, under some modifications of $\{d_i\}_{i=0}^{N_0}$
with some given $N_0\in\nn$ determined later,
$\cl$ is a collection of disjoint cubes.
To this end, we introduce the notation of \emph{descendants} as follows.
For any given $I\in \mathcal{L}_i$ with $i\in \mathbb{Z}_+$,
let $\mathcal{J}_{I}^{(i)}:= \{I\}$,
$$\mathcal{J}_{I}^{(j)} := \lf\{I':\
I' {\rm\ is\ one\ of\ the\ children\ of\ }
\widetilde{I}{\rm\ for\ some\ }\widetilde{I}\in\mathcal{J}_{I}^{(j-1)} \r\}$$
for any integer $j\in[i+1,\fz)$. Then
$\mathcal{J}_{I}:=\bigcup_{j=i}^\infty \mathcal{J}_{I}^{(j)}$
is called the \emph{descendant} of $I$.
Notice that, in particular, $\cl$ is just the descendant of $I_0$.

Now, to investigate the disjointness of $\cl$,
for any given $I\in \mathcal{L}_i$ with $i\in \mathbb{Z}_+$,
we define
\[
D_{I} := \sup_{\widetilde{I}\in \mathcal{J}_{I}}
\sup_{x\in\widetilde{I}}\dist(x, I)
\]
which denotes the ``radius'' of the descendant of $I$,
and claim that
\begin{align}\label{DI}
D_{I} = \sqrt{n}\sum_{k=i}^\infty (d_k+l_{k+1}).
\end{align}
Indeed, we first show that \begin{align}\label{DI1}
	D_{I} \ge \sqrt{n}\sum_{k=i}^\infty (d_k+l_{k+1}).
\end{align}
According to the definition of $\mathcal{J}_{I}$
and some geometrical observations, for any integer $j\in(i,\fz)$,
there exists an $\widetilde{I}\in \mathcal{J}_{I}^{(j)}$ such that
$$\dist\lf(I,\widetilde{I}\r) = \sqrt{n}\lf[-l_j+ \sum_{k=i}^{j-1}(d_k+l_{k+1})\r].$$
By this and a geometrical observation,
we find that there exist a point $x\in \widetilde{I}$
such that
$$\sup_{x\in\widetilde{I}}\dist(x, I)
=\sqrt{n} l_j+\dist(I,\widetilde{I})
=\sqrt{n}\sum_{k=i}^{j-1}(d_k+l_{k+1}),$$
which implies that
$D_{I} \ge \sqrt{n}\sum_{k=i}^{j-1} (d_k+l_{k+1})$
for any integer $j\in(i,\fz)$, and hence \eqref{DI1} holds true.
On the other hand, we show that $D_{I} \le \sqrt{n}\sum_{k=i}^\infty (d_k+l_{k+1})$.
Let integer $j\in(i,\fz)$ and $\widetilde{I}\in \mathcal{J}_{I}^{(j)}$.
If $j=i+1$, then we obviously have
$$\sup_{\widetilde{I} \in \mathcal{J}_I^{(j)}} \sup_{x \in \widetilde{I}} \dist(x,I)
=\sqrt{n}\lf(d_i+l_i\r)\le \sqrt{n}\sum_{k=i}^\infty (d_k+l_{k+1}),$$
which is the desired estimate.
If $j\ge i+2$, then, from the definition of $\mathcal{L}$
and some geometrical observations,
it follows that
$$\dist\lf(\widetilde{I},I\r)
\le \sqrt{n}\lf[d_{j-1}+\sum_{k=i}^{j-2} (d_k+l_{k+1})\r],$$
which further implies that
\begin{align*}
\sup_{\widetilde{I} \in \mathcal{J}_I^{(j)}} \sup_{x \in \widetilde{I}} \dist(x,I)
&\le\sup_{\widetilde{I} \in \mathcal{J}_I^{(j)}}
\lf[\sqrt{n}l_j+\dist\lf(\widetilde{I},I\r)\r]\\
&\le \sqrt{n}\sum_{k=i}^{j-1} (d_k+l_{k+1})
\le \sqrt{n}\sum_{k=i}^\infty (d_k+l_{k+1}).
\end{align*}
To sum up, we have
$$D_{I} \le \sqrt{n}\sum_{k=i}^\infty (d_k+l_{k+1}),$$
which, together with \eqref{DI1}, further shows  that
\eqref{DI}, and hence this claim, holds true.

By \eqref{DI}, we find that, for any given $i\in\zz_+$
and any $I\in\cl_i$,
$D_I=\sqrt{n}\sum_{k=i}^\infty (d_k+l_{k+1})$
depends only on $i$, and hence we can define
\begin{align}\label{Di-def}
D_i:=\sqrt{n}\sum_{k=i}^\infty (d_k+l_{k+1}).
\end{align}
Moreover, for any $i\in\mathbb{Z}_+$, we have
\begin{align}\label{Di}
D_i =& \sum_{k=i}^\infty \lf(d_k+l_{k+1}\r)
= \sum_{k=i}^\infty \lf[2^{-\frac{1}{2n}(k+1)^2}+2^{-\frac{1}{2n}(k+2)^2}\r]\noz\\
=& 2^{-\frac{1}{2n}(i+1)^2} \sum_{k=i}^\infty \left\{2^{-\frac{1}{2n}[(k+1)^2-(i+1)^2]}+2^{-\frac{1}{2n}[(k+2)^2-(i+1)^2]}\right\}\noz\\
=& d_i \lf[\sum_{k=i}^\infty 2^{-\frac{1}{2n}(k-i)(k+i+2)}
+ \sum_{k=i+1}^\infty 2^{-\frac{1}{2n}(i+k+2)(k-i)}\r]
< \frac{2^{\frac{1}{2n}}+1}{2^{\frac{1}{2n}}-1}d_i.
\end{align}

As a counterpart of the above $D_I$,
for any $I\in \mathcal{L}_i$ with $i\in\zz_+$, let
\[
\delta_{I} := \inf_{\widehat{I}\in \mathcal{J}_{I}\setminus\{I\}} \dist(I,\widehat{I})
\]
which denotes the closest distance from $I$ to
$\widehat{I} \in \mathcal{J}_{I}\setminus\{I\}$,
and we claim that
\begin{align}\label{dzI}
\delta_I = \max\lf\{0,\sqrt{n}\lf(d_i-D_{i+1}\r)\r\}.
\end{align}
Indeed, if $\sqrt{n}(d_i-D_{i+1})<0$, then,
from some geometrical observations, it follows that
$I$ intersects some cube in the descendant of $I$,
and hence $\delta_I=0$.
Thus, in what follows, we may assume that
$\sqrt{n}(d_i-D_{i+1})\ge0$,
and first show that
\begin{align}\label{dzI1}
\delta_{I} \le \sqrt{n}\lf(d_i-D_{i+1}\r)
\end{align}
Let $j\in(i,\fz)$ be any given integer.
If $j=i+1$, then, for any $\widehat{I} \in \mathcal{J}_{I}^{(j)}$,
$\widehat{I}$ must be one of the children of $I$ and hence
\begin{align}\label{dwII1}
\dist\lf(\widehat{I},I\r)=\sqrt{n} d_i.
\end{align}
If $j\ge i+2$, then, via some geometrical observations, we find that
there exists an $\widehat{I} \in \mathcal{J}_{I}^{(j)}$ satisfying
\begin{align}\label{dwII2}
\dist\lf(\widehat{I},I\r) = \sqrt{n}\lf[d_i- \sum_{k=i+1}^{j-1}(d_k+l_{k+1})\r].
\end{align}
Combining \eqref{dwII1} and \eqref{dwII2},
and taking the infimum over $\widehat{I} \in \mathcal{J}_{I}^{(j)}$
with integer $j\in(i,\fz)$, we obtain
$$\dz_I\le\lim_{j\to\fz}\sqrt{n}\lf[d_i- \sum_{k=i+1}^{j-1}(d_k+l_{k+1})\r]
=\sqrt{n}\lf(d_i-D_{i+1}\r),$$
which shows that \eqref{dzI1} holds true.
On the other hand, we show that $\delta_{I} \ge \sqrt{n}[d_i-D_{i+1}]$.
Let $j\in(i,\fz)$ be any given integer.
The case $j=i+1$ has been discussed in \eqref{dzI1}.
If $j\ge i+2$, then, via some geometrical observations again,
we find that there exists an $\widehat{I}\in\mathcal{J}_{I}^{j}$ satisfying
\begin{align*}
\inf_{\widetilde{I}\in \mathcal{J}_{I}^j} \dist\lf(\widetilde{I},I\r)
&=\dist\lf(\widehat{I},I\r)
=\sqrt{n}\lf[d_i- \sum_{k=i+1}^{j-1}(d_k+l_{k+1})\r]\\
&\ge\sqrt{n}\lf[d_i- \sum_{k=i+1}^{\fz}(d_k+l_{k+1})\r]
=\sqrt{n}\lf(d_i-D_{i+1}\r).
\end{align*}
Taking the infimum over integer $j\in(i,\fz)$,
we obtain
$$\delta_{I} \ge \sqrt{n}\lf(d_i-D_{i+1}\r),$$
which, combined with \eqref{dzI1}, shows that
\eqref{dzI} and hence the above claim hold true.

From \eqref{dzI}, it follows that, for any given $i\in\zz_+$
and any $I\in\cl_i$,
$\dz_I=\sqrt{n}\lf(d_i-D_{i+1}\r)$
depends only on $i$, and hence we can define
\begin{align}\label{dzi-def}
\dz_i:=\max\lf\{0,\sqrt{n}\lf(d_i-D_{i+1}\r)\r\}.
\end{align}
By the definition of $\dz_i$ (namely, $\dz_I$ with $I\in\cl_i$),
we easily deduce that, if $\delta_i>0$ for any $i\in\zz_+$,
then $\mathcal{L}$ is a collection of disjoint cubes.
Therefore, to obtain the disjointness of $\cl$,
it suffices to check whether or not
\begin{align*}
d_i-D_{i+1}>0
\end{align*}
holds true for any $i\in\zz_+$.
Indeed, from \eqref{Di}, it follows that, for any $i\in\zz_+$,
\begin{align*}
d_i-D_{i+1} >& d_i- \frac{2^{\frac{1}{2n}}+1}{2^{\frac{1}{2n}}-1}d_{i+1}\\
=& 2^{-\frac{1}{2n}(i+1)^2} - \frac{2^{\frac{1}{2n}}+1}{2^{\frac{1}{2n}}-1}2^{-\frac{1}{2n}(i+2)^2}\noz\\
=& \lf[1-\frac{2^{\frac{1}{2n}}+1}{2^{\frac{1}{2n}}-1}2^{-\frac{(2i+3)}{2n}}\r]2^{-\frac{1}{2n}(i+1)^2}\noz\\
=& \lf[1-\frac{2^{\frac{1}{2n}}+1}{2^{\frac{1}{2n}}-1}2^{-\frac{(2i+3)}{2n}}\r]d_i.\noz
\end{align*}
Since $2^{-\frac{(2i+3)}{2n}}$ tends to $0$ as $i \to \infty$,
it follows that
there exists an $N_0\in\nn$ such that, for any integer $i\in(N_0,\fz)$,
\begin{align}\label{dzi>0}
\dz_i=\sqrt{n}\lf(d_i-D_{i+1}\r) > \frac12 d_i>0.
\end{align}
Thus, it remains to handle with the case $i\in\{0,\,\dots,\,N_0\}$,
and we modify it step by step as follows.
\begin{enumerate}
\item[{\rm(b)}$_1$]
Choose a $\widetilde{d}_{N_0}\in(0,\fz)$ such that
$\widetilde{d}_{N_0}-D_{N_0+1}>0$,
and replace $d_{N_0}$ by $\widetilde{d}_{N_0}$
in the above (a)$_{3}$.
Then, by \eqref{Di-def}, we find that $D_{N_0}$ becomes
$$\widetilde{D}_{N_0}=\sqrt{n}\lf[\widetilde{d}_{N_0}+l_{N_0+1}+
\sum_{k=N_0+1}^\infty \lf(d_k+l_{k+1}\r)\r]\in(0,\fz),$$
and $\{D_i\}_{i=N_0+1}^\fz$ do not change.
From this and \eqref{dzi-def},
it follows that $\dz_{N_0}$ becomes
$$\widetilde{\dz}_{N_0}=\sqrt{n}\lf(\widetilde{d}_{N_0}-D_{N_0+1}\r)\in(0,\fz),$$
and $\{\dz_i\}_{i=N_0+1}^\fz$ do not change.

\item[{\rm(b)}$_2$]
Choose a $\widetilde{d}_{N_0-1}\in(0,\fz)$ such that
$\widetilde{d}_{N_0-1}-\widetilde{D}_{N_0}>0$.
Replace $d_{N_0-1}$ and $d_{N_0}$, respectively,
by $\widetilde{d}_{N_0-1}$ and $\widetilde{d}_{N_0}$
in the above (a)$_{3}$.
Then, by \eqref{Di-def}, we find that $D_{N_0-1}$ becomes
$$\widetilde{D}_{N_0-1}=\sqrt{n}\lf[\widetilde{d}_{N_0-1}+l_{N_0}+
\widetilde{d}_{N_0}+l_{N_0+1}+
\sum_{k=N_0+1}^\infty \lf(d_k+l_{k+1}\r)\r]\in(0,\fz),$$
and $\{\widetilde{D}_{N_0},D_{N_0+1},\,\dots\}$ do not change.
From this and \eqref{dzi-def},
it follows that $\dz_{N_0-1}$ becomes
$$\widetilde{\dz}_{N_0-1}=\sqrt{n}\lf(\widetilde{d}_{N_0-1}-\widetilde{D}_{N_0}\r)\in(0,\fz),$$
and $\{\widetilde{\dz}_{N_0},\dz_{N_0+1},\,\dots\}$ do not change.

\item[{\rm(b)}$_3$]
Iterate the above procedure until
$d_{0}$ is replaced by some $\widetilde{d}_{0}\in(0,\fz)$.
Then $D_{0}$ becomes
\begin{align}\label{D0}
\widetilde{D}_{0}
=\sqrt{n}\lf[\sum_{k=0}^{N_0} \lf(\widetilde{d}_k+l_{k+1}\r)
+\sum_{k=2}^\infty \lf(d_k+l_{k+1}\r)\r]\in(0,\fz),
\end{align}
and $\{\widetilde{D}_{1},\,\dots,\,\widetilde{D}_{N_0},D_{N_0+1},\,\dots\}$ do not change.
Also, $\dz_{0}$ becomes
$$\widetilde{\dz}_{0}=\sqrt{n}\lf(\widetilde{d}_{0}-\widetilde{D}_{1}\r)\in(0,\fz),$$
and $\{\widetilde{\dz}_{1},\,\dots,\,\widetilde{\dz}_{N_0},\dz_{N_0+1},\,\dots\}$ do not change.
\end{enumerate}
For any $i\in\zz_+$, let
$$\widehat{d}_i:=
\begin{cases}
	\widetilde{d}_i &{\rm if\ }i\in\{0,\,\dots,\, N_0\},\\
	d_i &{\rm if\ }i\in\{N_0+1,\dots\},
\end{cases}$$
$$
\widehat{D}_i:=
\begin{cases}
	\widetilde{D}_i &{\rm if\ }i\in\{0,\,\dots,\, N_0\},\\
	D_i &{\rm if\ }i\in\{N_0+1,\dots\},
\end{cases}$$
and
$$
\widehat{\dz}_i:=
\begin{cases}
	\widetilde{\dz}_i &{\rm if\ }i\in\{0,\,\dots,\, N_0\},\\
	\dz_i &{\rm if\ }i\in\{N_0+1,\dots\}.
\end{cases}$$
So far, we obtain a new family $\widehat{\cl}$,
associated with
$\{l_i,\widehat{d}_i,\widehat{D}_i,\widehat{\dz}_i\}_{i\in\zz_+}$,
which is a collection of disjoint cubes because
\begin{align*}
\widehat{\dz}_i=\widehat{d}_i-\widehat{D}_{i+1}>0
\end{align*}
for any $i\in\zz_+$.
This is the desired family of disjoint cubes.

In the remainder of this proof,
for the simplicity of the presentation,
we remove the hats of $\widehat{\cl},\ \widehat{d}_i,\ \widehat{D}_i$,
and $\widehat{\dz}_i$ for any $i\in\zz_+$.
Thus, this new $\cl$ is a collection of disjoint cubes.
Keep in mind that, from now on,
$\{d_i,D_i,\dz_i\}_{i=0}^{N_0}$
are some new positive numbers.

Now, choose the side length $L_0$ satisfying
$L_0\in(l_0+2\widetilde{D}_0,\fz)$,
where $l_0=2^{-\frac1{2n}}$, and $\widetilde{D}_0$ is as in \eqref{D0}.
Then $Q_0:=[-\frac{L_0}2,\frac{L_0}2]^n$ contains the descendant of $I_0$, that is,
all subcubes in this new $\mathcal{L}=\bigcup_{i=0}^\fz\cl_i$.
Let
\[f:=  \sum_{i=0}^\fz \sum_{I\in\mathcal{L}_i} h_i\one_{I}\]
with the height $h_i := 2^{\frac{1}{2}(\frac{1}{p}-\alpha)i^2}$
for any $i\in\zz_+$.
Then, by \eqref{sharpIi} and $1-\frac{q}{p}+q\alpha\in(0,1-\frac{q}{p})$,
we obtain
\begin{align}\label{Lq}
\int_{Q_0}|f(x)|^q \,dx
&=\sum_{i=0}^{\infty}\sum_{I_i\in\mathcal{L}_i} h_i^q l_i^n
=\sum_{i=0}^{\infty} 2^i 2^{\frac{1}{2}(\frac{q}{p}-q\alpha)i^2} 2^{-\frac{(i+1)^2}{2}}\noz\\
&=\sum_{i=0}^{\infty} 2^{-\frac{1}{2}(1-\frac{q}{p}+q\alpha)i^2-\frac{1}{2}}
< \infty,
\end{align}
and hence $f\in \Lq(Q_0)$.
Here and thereafter,
$h_i^q:=(h_i)^q$ and $l_i^n:=(l_i)^n$.
Similarly, we also have
\begin{align}\label{Lpa}
\int_{Q_0}|f(x)|^{\pa} \,dx
=&\sum_{i=0}^{\infty}  \sum_{I_i\in\mathcal{L}_i} h_i^{\pa} l_i^n
=\sum_{i=0}^{\infty} 2^i 2^{\frac{1}{2}i^2} 2^{-\frac{(i+1)^2}{2}}	
=\sum_{i=0}^{\infty} 2^{-\frac{1}{2}}
=\infty,
\end{align}
and hence $f\notin \Lpa(Q_0)$.
Thus, to prove this proposition, it remains to show that $f\in\RM(Q_0)$.

Let $\{Q_j\}_{j}$ be any given collection of subcubes of $Q_0$
with pairwise disjoint interiors.
For any $i\in\zz_+$, let
$$\mathcal{G}_i := \lf\{Q\in\{Q_j\}_j:\  \exists\,I\in\mathcal{L}_i
{\rm\ such\ that\ }Q^\circ\cap I\neq\emptyset\r\}$$
and
$$\mathcal{F}_i :=
\begin{cases}
\cg_0&{\rm if\ }i=0,\\
\mathcal{G}_i\setminus\bigcup_{k=0}^{i-1}\mathcal{G}_k
&{\rm if\ }i\in\nn.
\end{cases}$$
Moreover, let $\cf:= \bigcup_{i=0}^\infty \mathcal{F}_i$.

Now, it suffices to consider $\cf$
rather than $\{Q_i\}_i$, because,
for any cube $\widetilde{Q}\in\{Q_i\}_i\setminus\cf$,
we have $\|f\|_{\Lq(\widetilde{Q})} = 0$
and hence $\widetilde{Q}$ does not contribute anything to
the Riesz--Morrey norm. For any $i\in\zz_+$, we
define $\mathcal{F}_{i}^{\rm one}$ and $\mathcal{F}_{i}^{\rm more}$,
respectively, as follows:
\begin{enumerate}
\item[{\rm(c)}$_1$]
$\mathcal{F}_i^{\rm one}$ is defined to be the set of all
$Q\in\mathcal{F}_i$ satisfying that there exists only one cube
$I\in\mathcal{L}=\bigcup_{i=0}^\fz\cl_i$
such that $Q^\circ\cap I\neq\emptyset$.
\item[{\rm(c)}$_2$]
$\mathcal{F}_i^{\rm more}$ is defined to be the set of all
$Q\in\mathcal{F}_i$ satisfying that there exist more than one cube
$I\in\mathcal{L}=\bigcup_{i=0}^\fz\cl_i$
such that $Q^\circ\cap I\neq\emptyset$.
\end{enumerate}
Let $\mathcal{F}^{\rm one} := \bigcup_{i=0}^\infty \mathcal{F}_i^{\rm one}$
and $\mathcal{F}^{\rm more} := \cup_{i=0}^\infty \mathcal{F}_i^{\rm more}$.
So, we have
\begin{align}\label{s+l}
&\sum_{i}|Q_i|^{\onepqa} \|f\|^p_{\Lq(Q_i)}\noz\\
&\quad=\sum_{Q\in\mathcal{F}} |Q|^{\onepqa} \|f\|^p_{\Lq(Q)}
=\sum_{Q\in \mathcal{F}^{\rm one}} |Q|^{\onepqa} \|f\|^p_{\Lq(Q)}
+\sum_{Q\in \mathcal{F}^{\rm more}} \cdots\noz\\
&\quad=\mathcal{L}_{\rm one}+ \mathcal{L}_{\rm more},
\end{align}
where
$$\mathcal{L}_{\rm one}
:=\sum_{Q\in \mathcal{F}^{\rm one}}
|Q|^{\onepqa} \|f\|^p_{\Lq(Q)}$$
and
$$\mathcal{L}_{\rm more}
:=\sum_{Q\in \mathcal{F}^{\rm more}}
|Q|^{\onepqa} \|f\|^p_{\Lq(Q)}.$$

We estimate $\mathcal{L}_{\rm one}$ first.
For any given $i\in\zz_+$ and any $Q \in \mathcal{F}_i^{\rm one}$,
there exists an $I \in \mathcal{L}_i$ such that $Q^\circ\cap I\neq\emptyset$,
and hence we can relabel $Q$ as $Q^{(I)}$.
Moreover, for any $I \in \mathcal{L}_i$,
define $\mathcal{F}_{i,I}^{\rm one}$ to be the set of
all above $Q^{(I)}\in\mathcal{F}_{i}^{\rm one}$.
Then, from the definition of $\mathcal{L}_{\rm one}$
and the disjointness of $\{Q_j^\circ\}_j$, we deduce that,
for any $i\in\zz_+$,
$$\sum_{Q\in \mathcal{F}_{i}^{\rm one}} |Q|^{\onepqa} \|f\|^p_{\Lq(Q)}
=\sum_{I\in \mathcal{L}_i}
\sum_{Q\in \mathcal{F}_{i,I}^{\rm one}} |Q|^{\onepqa} \|f\|^p_{\Lq(Q)}.$$
By this, $\onepqa\in(1-\frac{p}{q},0)$, the definition of $f$,
$1-p\az\in(1,\frac{p}{q})$, the definitions of $\{h_i,l_i\}_{i\in\zz_+}$,
and \eqref{sharpIi}, we conclude that
\begin{align}\label{one}
\mathcal{L}_{\rm one}
=&\sum_{Q\in \mathcal{F}^{\rm one}} |Q|^{\onepqa} \|f\|^p_{\Lq(Q)}
=\sum_{i=0}^{\infty}  \sum_{Q\in \mathcal{F}_{i}^{\rm one}} |Q|^{\onepqa} \|f\|^p_{\Lq(Q)}\noz\\
=&  \sum_{i=0}^{\infty}  \sum_{I\in \mathcal{L}_i}
\sum_{Q\in \mathcal{F}_{i,I}^{\rm one}} |Q|^{\onepqa} \|f\|^p_{\Lq(Q)}\noz\\
\le&  \sum_{i=0}^{\infty}  \sum_{I\in \mathcal{L}_i}
\sum_{Q\in \mathcal{F}_{i,I}^{\rm one}} |Q\cap I|^{\onepqa} \|f\|^p_{\Lq(Q\cap I)}\noz\\
=&  \sum_{i=0}^{\infty}  \sum_{I\in \mathcal{L}_i}
\sum_{Q\in \mathcal{F}_{i,I}^{\rm one}} h_i^p |Q\cap I|^{1-p\alpha}\noz\\
=&  \sum_{i=0}^{\infty}  \sum_{I\in \mathcal{L}_i} |I|^{1-p\alpha}
\sum_{Q\in \mathcal{F}_{i,I}^{\rm one}} h_i^p
\lf(\frac{|Q\cap I|}{|I|}\r)^{1-p\alpha}\noz\\
\le&  \sum_{i=0}^{\infty}  \sum_{I\in \mathcal{L}_i} |I|^{1-p\alpha}
\sum_{Q\in \mathcal{F}_{i,I}^{\rm one}} h_i^p \frac{|Q\cap I|}{|I|}\noz\\
\le&  \sum_{i=0}^{\infty}  \sum_{I\in \mathcal{L}_i} h_i^p l_i^{n(1-p\alpha)}
= \sum_{i=0}^{\infty} 2^i 2^{\frac{1}{2}(1-p\alpha)i^2} 2^{-\frac{1}{2}(1-p\alpha)(i+1)^2}\noz\\
=& 2^{-(1-p\alpha)/2} \sum_{i=0}^{\infty} 2^{p\alpha i}
=\frac{2^{(1-p\alpha)/2}}{1-2^{p\alpha}}.
\end{align}
This is a desired estimate of $\mathcal{L}_{\rm one}$.

Next, we estimate $\mathcal{L}_{\rm more}$.
Let $i\in\zz_+$ and $Q\in\mathcal{F}_i^{\rm more}$.
From the definition of $\mathcal{F}_i^{\rm more}$,
it follows that there exist more than one $I\in\mathcal{L}$
such that $Q^\circ\cap I\neq\emptyset$.
This, together with some geometrical observations and
the definition of $\dz_i$, further implies that
$Q$ must contain some cube with side length $l = \frac{\delta_i}{\sqrt{n}}$.
Therefore, we obtain, for any $i\in\zz_+$ and $Q\in\mathcal{F}_i^{\rm more}$,
\begin{align}\label{dzi0}
|Q|\ge \lf(\frac{\delta_i}{\sqrt{n}}\r)^n.
\end{align}
This, together with \eqref{dzi>0}, further implies that,
for any integer $i\in(N_0,\fz)$ and any $Q\in\mathcal{F}_i^{\rm more}$,
we have
\begin{align}\label{dzi2}
|Q|\ge \lf(\frac{\delta_i}{\sqrt{n}}\r)^n>\lf(\frac{d_i}2\r)^n.
\end{align}

Now, we claim that the interior of any given $Q\in\mathcal{F}_i$ with $i\in\nn$
intersects only one cube in $\mathcal{L}_i$.
Indeed, otherwise $Q^\circ$ intersects at least two cubes in $\mathcal{L}_i$.
From this and the definition of $\mathcal{L}$,
it follows that $Q\supset \widetilde{I}$
for some $\widetilde{I}\in\mathcal{L}_{j}$ with $j\in \{0,1,\cdots,i-1\}$,
and hence $Q\in \cg_{i-1}$,
which contradicts to the fact that
$Q\in \cf_{i}=\mathcal{G}_i\setminus\bigcup_{k=0}^{i-1}\mathcal{G}_k$.
Therefore, the above claim holds true.
By this claim, we can relabel $Q$ as $Q^{(I)}$ with some $I\in\mathcal{L}_i$.
Meanwhile, for any $I\in\mathcal{L}_i$,
from this claim, the disjointness of $\{Q_j\}_{j}$,
and some geometrical observations, it follows that
there exist no more than two cubes $Q^{(I)}\in\mathcal{F}_i^{\rm more}$,
denoted by $\mathcal{F}_{i,I}^{\rm more}$,
such that $(Q^{(I)})^\circ\cap I\neq\emptyset$.
This implies that
\begin{align}\label{sharpFm}
\sharp\mathcal{F}_i^{\rm more}
\le2\sharp\mathcal{L}_i=2^{i+1}
\end{align}
and
\begin{align}\label{iIlong}
\sum_{Q\in \mathcal{F}_i^{\rm more}} |Q|^{\onepqa} \|f\|^p_{\Lq(Q)}
= \sum_{I\in\mathcal{L}_i}  \sum_{Q\in \mathcal{F}_{i,I}^{\rm more}}
|Q|^{\onepqa} \|f\|^p_{\Lq(Q)}.
\end{align}
Moreover, by the definition of $\mathcal{L}$ and this claim again,
we find that $Q=Q^{(I)}$ contains (at most) the descendant of $I$.
Using this as well as the definitions of $f$ and
$\{h_i,l_i\}_{i\in\zz_+}$, we have
\begin{align}\label{desce}
\|f\|_{\Lq(Q)}
\le \left( \sum_{k=i}^{\infty} 2^{k-i} h_k^q l_k^n\right)^{\frac{1}{q}}.
\end{align}

Thus, for any integer $i\in(N_0,\fz)$, from \eqref{iIlong},
\eqref{dzi2}, \eqref{desce},
$\onepqa\in(1-\frac{p}{q},0)$,
the definitions of $\{d_i,h_i,l_i\}_{i=N_0+1}^\fz$,
$1-\frac{q}{p}+q\alpha\in(0,1-\frac{q}{p})$, and \eqref{sharpIi},
we deduce that
\begin{align*}
&\sum_{Q\in \mathcal{F}_i^{\rm more}} |Q|^{\onepqa} \|f\|^p_{\Lq(Q)}\\
&\quad=  \sum_{I\in \mathcal{L}_i}  \sum_{Q\in \mathcal{F}_{i,I}^{\rm more}}
|Q|^{\onepqa} \|f\|^p_{\Lq(Q)}\\
&\quad\lesssim  \sum_{I\in \mathcal{L}_i} d_i^{n(\onepqa)}
\left(\sum_{k=i}^{\infty} 2^{k-i} h_k^q l_k^n\right)^{\frac{p}{q}}
\sim 2^i d_i^{n(\onepqa)} \left( \sum_{k=i}^{\infty} 2^{k-i} h_k^q l_k^n\right)^{\frac{p}{q}}\noz\\
&\quad\sim \left[ 2^{\frac{q}{p}i} 2^{-\frac{1}{2}(\frac{q}{p}-q\alpha-1)(i+1)^2}
\sum_{k=i}^{\infty} 2^{k-i} 2^{\frac{1}{2}(\frac{q}{p}-q\alpha)k^2}
2^{-\frac{1}{2}(k+1)^2} \right]^{\frac{p}{q}}\\
&\quad\sim \left[2^{q\alpha i}
\sum_{k=i}^{\infty} 2^{-\frac{1}{2}(1-\frac{q}{p}+q\alpha)(k^2-i^2)} \right]^{\frac{p}{q}}
\lesssim 2^{p\alpha i}
\end{align*}
and hence
\begin{align}\label{long1}
\sum_{i=N_0+1}^\fz  \sum_{Q_i\in \mathcal{F}_i^{\rm more}} |Q|^{\onepqa} \|f\|^p_{\Lq(Q)}
\lesssim&  \sum_{i=N_0+1}^\fz 2^{p\alpha i}< \infty
\end{align}
due to $\az\in(\frac1p-\frac1q,0)$.
Meanwhile, by \eqref{dzi0}, $\onepqa\in(1-\frac{p}{q},0)$,
$\{\dz_i\}_{i=0}^{N_0}\subset (0,\fz)$, \eqref{sharpFm}, and \eqref{Lq},
we conclude that
\begin{align}\label{long2}
&\sum_{i=0}^{N_0}\sum_{Q\in \mathcal{F}_i^{\rm more}} |Q|^{\onepqa} \|f\|^p_{\Lq(Q)}\noz\\
&\quad\le \sum_{i=0}^{N_0}\lf\{2^{i+1}
\lf[\min_{i = \{0,1, \cdots, N_0\}}{\lf(\frac{\delta_i}{\sqrt{n}}\r)^{n(\onepqa)}}\r]
\|f\|^p_{\Lq(Q_0)}\r\} \noz\\
&\quad\ls 2^{N_0}\lf[\min_{i = \{0,1, \cdots, N_0\}}{\delta_i^{n(\onepqa)}}\r]
\|f\|^p_{\Lq(Q_0)}
< \infty.
\end{align}

Using \eqref{long1} and \eqref{long2}, we obtain
$$\mathcal{L}_{\rm more}
=\sum_{Q\in \mathcal{F}^{\rm more}} |Q|^{\onepqa} \|f\|^p_{\Lq(Q)}
=\sum_{i=0}^{\infty}  \sum_{Q\in \mathcal{F}_{i}^{\rm more}}
|Q|^{\onepqa} \|f\|^p_{\Lq(Q)}
<\fz,$$
which, combined with \eqref{one} and \eqref{s+l},
further implies that
$$\sum_{i}|Q_i|^{\onepqa} \|f\|^p_{\Lq(Q_i)}<\fz.$$
From this and the arbitrariness of
$\{Q_i\}_i$, it follows that
\[
\|f\|_{\RM(Q_0)}
= \sup\left[\sum_i |Q_i|^{\onepqa}\|f\|^p_{\Lq(Q_i)}\right]^{\frac{1}{p}}
< \infty
\]
and hence $f\in \RM(Q_0)$.
Therefore, $f\in \RM(Q_0)\setminus L^{\frac{p}{1-p\az}}(Q_0)$
due to \eqref{Lpa},
which completes the proof of Proposition \ref{prop-Q}.
\end{proof}

\begin{remark}\label{rem-Q0}
Let $p\in(1,\fz)$, $q\in[1,p)$, and $\az\in(\frac1p-\frac1q,0)$.
Observe that the function $f_1$,
in the proof of Theorem \ref{thm-Q12}(i) with some dilation and translation,
belongs to $L^{\frac{p}{1-p\az},\fz}(Q_0)\setminus RM_{p,q,\az}(Q_0)$,
which implies that
$$L^{\frac{p}{1-p\az},\fz}(Q_0)\nsubseteq RM_{p,q,\az}(Q_0),$$
where the \emph{weak Lebesgue space} $L^{\frac{p}{1-p\az},\fz}(Q_0)$
is defined as in Remark \ref{rem-rn} with $\rn$ replaced by $Q_0$.
However, the example $f$ in the proof of Proposition \ref{prop-Q} also belongs
to both $L^{\frac{p}{1-p\az},\fz}(Q_0)$ and $RM_{p,q,\az}(Q_0)$,
and hence we can not deduce
\begin{align}\label{openQ}
RM_{p,q,\az}(Q_0)\nsubseteq L^{\frac{p}{1-p\az},\fz}(Q_0)
\end{align}
from this function.
As a counterpart of Remark \ref{rem-rn},
it is interesting to ask whether or not \eqref{openQ}
still holds true. This is still \emph{unclear} so far.
\end{remark}

Based on above three propositions, we immediately complete the
proof of Theorem \ref{thm-Q12}(ii).
\begin{proof}[Proof of Theorem \ref{thm-Q12}(ii)]
It follows directly from Propositions \ref{prop-<}, \ref{prop-rn}, and \ref{prop-Q}.
This finishes the proof of Theorem \ref{thm-Q12}(ii).
\end{proof}

At the end of this article, according to \cite[Theorem 1 and Corollary 1]{tyy21}
and Theorems \ref{thm-Q23} and \ref{thm-Q12},
it is easy to summarize all the classifications
of the Riesz--Morrey space in the following corollary,
and we omit the details here.
\begin{corollary}\label{coro}
\begin{itemize}
\item[{\rm(i)}]
Let $p\in(1,\fz]$ and $q\in[1,p)$. Then
$$RM_{p,q,\az}(\rn)
\begin{cases}
=L^q(\rn) &{\rm if\ } \az=\frac1p-\frac1q,\\
\supsetneqq L^{\frac{p}{1-p\az}}(\rn)&{\rm if\ } \az\in\lf(\frac1p-\frac1q,0\r),\\
=L^p(\rn) &{\rm if\ } \az=0,\\
=\{0\} &{\rm if\ } \az\in\lf(-\fz,\frac1p-\frac1q\r)\cup(0,\fz).
\end{cases}$$
In particular, $RM_{\fz,q,\az}(\rn)=M_{q,\az}(\rn)$ if $\az\in(-\frac1q,0)$.
	
\item[{\rm(ii)}]
Let $p\in[1,\fz]$ and $q\in[p,\fz]$. Then
$$RM_{p,q,\az}(\rn)
\begin{cases}
=L^q(\rn) &{\rm if\ } \az=\frac1p-\frac1q=0,\\
=\{0\} &{\rm if\ } \az=\frac1p-\frac1q\neq0,\\
=\{0\} &{\rm if\ } \az\in\rr\setminus\lf\{\frac1p-\frac1q\r\}.
\end{cases}$$
	
\item[{\rm(iii)}]
Let $p\in(1,\fz]$, $q\in[1,p)$, and
$Q_0$ be any cube of $\rn$. Then
$$RM_{p,q,\az}(Q_0)
\begin{cases}
=L^q(Q_0) &{\rm if\ } \az=\lf(-\fz,\frac1p-\frac1q\r],\\
\supsetneqq L^{\frac{p}{1-p\az}}(Q_0)&{\rm if\ } \az\in\lf(\frac1p-\frac1q,0\r),\\
=L^p(Q_0) &{\rm if\ } \az=0,\\
=\{0\} &{\rm if\ } \az\in(0,\fz).
\end{cases}$$
In particular, $RM_{\fz,q,\az}(Q_0)=M_{q,\az}(Q_0)$ if $\az\in(-\frac1q,0)$.

\item[{\rm(iv)}]
Let $p\in[1,\fz]$, $q\in[p,\fz]$, and
$Q_0$ be any cube of $\rn$. Then
$$RM_{p,q,\az}(Q_0)
\begin{cases}
=L^q(Q_0) &{\rm if\ } \az\in(-\fz,0],\\
=\{0\} &{\rm if\ } \az\in(0,\fz).
\end{cases}$$
\end{itemize}
\end{corollary}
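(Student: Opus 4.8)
The plan is to derive Corollary \ref{coro} as a bookkeeping consequence of the results already established, namely \cite[Theorem 1 and Corollary 1]{tyy21} together with Theorems \ref{thm-Q23} and \ref{thm-Q12}. Concretely, for each of the four ambient settings I would partition the relevant $(p,q,\az)$-region along the two critical values $\az=\pmq$ and $\az=0$, and then attach to every resulting stratum the earlier result that identifies $\RM(\cx)$ there; the only thing to check is that these strata tile the region with no gap and no overlap, so that the identifications are mutually consistent and the four tables are exactly filled.

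For item (i) ($p\in(1,\fz]$, $q\in[1,p)$, $\cx=\rn$, hence $\pmq<0$), I would read off from \cite[Theorem 1 and Corollary 1]{tyy21} the three ``old'' strata: $\az\le\pmq$ gives $L^q(\rn)$, $\az=0$ gives $L^p(\rn)$, and $\az\in(-\fz,\pmq)\cup(0,\fz)$ gives $\{0\}$ (these lie outside the three unresolved cases of Question \ref{open}). The single remaining stratum $\az\in(\pmq,0)$ is our Theorem \ref{thm-Q12}: part (i) forces the index to be $\pa$, and part (ii), via Propositions \ref{prop-<} and \ref{prop-rn}, makes the embedding $\Lpa(\rn)\subset\RM(\rn)$ proper. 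When $p=\fz$, the window $(\pmq,0)=(-\frac1q,0)$ is instead handled by unwinding the $p=\fz$ branch of the definition of $\RM(\cx)$, which is literally the Morrey norm \eqref{morrey}; this gives the ``in particular'' clause $RM_{\fz,q,\az}(\rn)=M_{q,\az}(\rn)$, and the strict inclusion $M_{q,\az}(\rn)\supsetneqq\Lpa(\rn)$ is classical. Item (ii) ($p\in[1,\fz]$, $q\ge p$, $\cx=\rn$) is the degenerate counterpart: now $\pmq\ge0$, the window $(\pmq,0)$ is empty, and every stratum — $\{p=q,\ \az=0\}$ giving $L^q(\rn)$, all others giving $\{0\}$ — is already covered by \cite[Theorem 1 and Corollary 1]{tyy21}, with the sub-cases $p=\fz=q$ reducing to the trivial $M_{\fz,0}(\rn)=L^\fz(\rn)$ and $RM_{\fz,\fz,\az}(\rn)=\{0\}$ for $\az\ne0$.

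For items (iii) and (iv) I would fix a cube $Q_0$ and run the same scheme; the new feature is that on a set of finite measure both the ``Lebesgue'' and the ``zero'' strata are larger. In (iii) ($p\in(1,\fz]$, $q\in[1,p)$): for $\az\in(-\fz,\pmq]$ I would combine Theorem \ref{thm-Q23}(i) (stratum $\az\in(-\frac1q,\pmq]$) with the finite-measure part of \cite[Corollary 1]{tyy21} (stratum $\az\le-\frac1q$) to get $L^q(Q_0)$; for $\az=0$ take $L^p(Q_0)$ from \cite[Theorem 1]{tyy21}; for $\az\in(\pmq,0)$ take the proper inclusion $\RM(Q_0)\supsetneqq\Lpa(Q_0)$ from Theorem \ref{thm-Q12}(ii) via Propositions \ref{prop-<} and \ref{prop-Q} (the $p=\fz$ window again producing $M_{q,\az}(Q_0)$); and for $\az\in(0,\fz)$ take $\RM(Q_0)=\{0\}$, a case outside Question \ref{open} and hence recorded in \cite{tyy21} (it can also be seen directly: squeeze $|f|\ge\ez\one_{\{|f|>\ez\}}$ and apply a Lebesgue-density-plus-subdivision estimate using Lemma \ref{lem-eq}(iv), where the divergence now comes from a factor $N^{np\az}$ as the number $N^n$ of subcubes grows). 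Item (iv) ($q\ge p$, cube $Q_0$) is the bounded-domain version of (ii): $\az\le0$ gives $L^q(Q_0)$ (Theorem \ref{thm-Q23}(i) and \cite[Corollary 1]{tyy21}), and $\az>0$ gives $\{0\}$ (Theorem \ref{thm-Q23}(ii) when $q>p$, and trivially when $q=p$ since then $\onepqa<0$).

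I expect the only genuine obstacle to be the boundary arithmetic: keeping straight which side of $\pmq$ and of $0$ each cited result owns, that on $Q_0$ the Lebesgue stratum in (iii) is the whole ray $(-\fz,\pmq]$ rather than merely $(-\frac1q,\pmq]$ (so that the finite-measure input of \cite[Corollary 1]{tyy21} is genuinely needed alongside Theorem \ref{thm-Q23}(i)), and that the ``zero on a cube'' phenomenon for $\az>0$ holds for all $q\ge1$, not only $q>p$. Once the strata are correctly aligned, each entry of (i)--(iv) is a one-line citation, which is why the routine verification is omitted.
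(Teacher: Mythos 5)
Your proposal takes exactly the route the paper intends: Corollary \ref{coro} is pure bookkeeping, obtained by cutting each $(p,q,\az)$-region along $\az=\pmq$ and $\az=0$ and attaching \cite[Theorem 1 and Corollary 1]{tyy21}, Theorem \ref{thm-Q23}, and Theorem \ref{thm-Q12} to the resulting strata (the paper itself omits all details). One slip to fix in your item (i): on $\rn$ the Lebesgue stratum is the single value $\az=\pmq$, not the ray $\az\le\pmq$, so your opening claim that ``$\az\le\pmq$ gives $L^q(\rn)$'' contradicts the $\{0\}$ stratum $(-\fz,\pmq)\cup(0,\fz)$ you list right after it; the full ray $(-\fz,\pmq]$ yields $L^q$ only in the cube case (iii), as you correctly observe later. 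A second, smaller imprecision: in item (iv) Theorem \ref{thm-Q23}(ii) only covers $\az\in(0,\pmq)$ with $q>p$, so the remaining piece $\az\ge\pmq>0$ must again be charged to \cite{tyy21} (or to the subdivision argument you sketch).
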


\noindent\textbf{Acknowledgement}.
Zongze Zeng and Jin Tao would like to thank
Yangyang Zhang for some helpful discussions
on Proposition \ref{prop-rn}.

\bigskip

\noindent Zongze Zeng, Jin Tao and Dachun Yang (Corresponding author)

\smallskip

\noindent  Laboratory of Mathematics and Complex Systems
(Ministry of Education of China),
School of Mathematical Sciences, Beijing Normal University,
Beijing 100875, People's Republic of China

\smallskip

\noindent {\it E-mails}: \texttt{zzzeng@mail.bnu.edu.cn} (Z. Zeng)

\noindent\phantom{{\it E-mails:}} \texttt{jintao@mail.bnu.edu.cn} (J. Tao)

\noindent\phantom{{\it E-mails:}} \texttt{dcyang@bnu.edu.cn} (D. Yang)

\bigskip

\noindent Der-Chen Chang

\medskip

\noindent  Department of Mathematics and Statistics, Georgetown University,
Washington D. C. 20057, USA\\
Graduate Institute of Business Adminstration, College of Management,
Fu Jen Catholic University, New Teipei City 242, Taiwan,
Republic of China

\smallskip

\noindent{\it E-mail:}
\texttt{chang@georgetown.edu}


\begin{thebibliography}{99}

\bibitem{a15}
D. R. Adams,
Morrey Spaces,
Lecture Notes in Applied and Numerical Harmonic Analysis,
Birkh\"auser/Springer, Cham, 2015.

\vspace{-0.3cm}

\bibitem{ax12}
D. R. Adams and J. Xiao,
Morrey spaces in harmonic analysis,
Ark. Mat. 50 (2012), 201--230.

\vspace{-0.3cm}

\bibitem{dhky18}
G. Dafni, T. Hyt\"onen, R. Korte and H. Yue,
The space {$JN_p$}: nontriviality and duality,
J. Funct. Anal. 275 (2018), 577--603.

\vspace{-0.3cm}

\bibitem{dn20}
G. Di Fazio and T. Nguyen,
Regularity estimates in weighted Morrey spaces for quasilinear elliptic equations,
Rev. Mat. Iberoam. 36 (2020), 1627--1658.

\vspace{-0.3cm}

\bibitem{dr93}
G. Di Fazio and M. A. Ragusa,
Interior estimates in Morrey spaces for strong solutions to nondivergence form equations
with discontinuous coefficients,
J. Funct. Anal. 112 (1993), 241--256.

\vspace{-0.3cm}

\bibitem{gtm249}
L. Grafakos,
Classical Fourier analysis,
Third edition,
Graduate Texts in Mathematics, 249,
Springer, New York, 2014.

\vspace{-0.3cm}

\bibitem{hns17}
D. I. Hakim, S. Nakamura and Y. Sawano,
Complex interpolation of smoothness Morrey subspaces,
Constr. Approx. 46 (2017),  489--563.

\vspace{-0.3cm}

\bibitem{hs20}
D. I. Hakim and Y. Sawano,
Complex interpolation of various subspaces of Morrey spaces,
Sci. China Math. 63 (2020), 937--964.

\vspace{-0.3cm}

\bibitem{ly13}
C. Lin and Q. Yang,
Semigroup characterization of Besov type Morrey spaces and
well-posedness of generalized Navier--Stokes equations,
J. Differential Equations 254 (2013), 804--846.

\vspace{-0.3cm}

\bibitem{lwyy19}
L. Liu, S. Wu, D. Yang and W. Yuan,
New characterizations of Morrey spaces and their preduals
with applications to fractional Laplace equations,
J. Differential Equations 266 (2019), 5118--5167.

\vspace{-0.3cm}

\bibitem{ms19}
M. Masty{\l}o and Y. Sawano,
Complex interpolation and Calder\'on--Mityagin couples of Morrey spaces,
Anal. PDE 12 (2019), 1711--1740.

\vspace{-0.3cm}

\bibitem{mst18}
M. Masty{\l}o, Y. Sawano and H. Tanaka,
Morrey-type space and its K\"othe dual space,
Bull. Malays. Math. Sci. Soc. 41 (2018), 1181–1198.

\vspace{-0.3cm}

\bibitem{m38}
C. B. Morrey,
On the solutions of quasi-linear elliptic partial differential equations,
Trans. Amer. Math. Soc. 43 (1938), 126--166.

\vspace{-0.3cm}

\bibitem{r1910}
F. Riesz,
Untersuchungen \"uber systeme integrierbarer funktionen. (German)
Math. Ann. 69 (1910), 449-497.

\vspace{-0.3cm}

\bibitem{sfk20i}
Y. Sawano, G. Di Fazio and D. Hakim,
Morrey Spaces: Introduction and Applications to Integral Operators and PDE's,
Volume I, Chapman and Hall/CRC, New York, 2020.

\vspace{-0.3cm}

\bibitem{sfk20ii}
Y. Sawano, G. Di Fazio and D. Hakim,
Morrey Spaces: Introduction and Applications to Integral Operators and PDE's,
Volume II, Chapman and Hall/CRC, New York, 2020.

\vspace{-0.3cm}

\bibitem{s03}
Z. Shen,
Boundary value problems in Morrey spaces for elliptic systems on Lipschitz domains,
Amer. J. Math. 125 (2003), 1079--1115.

\vspace{-0.3cm}

\bibitem{tyy20}
J. Tao, Da. Yang and Do. Yang,
Beurling--Ahlfors commutators on weighted Morrey spaces
and applications to Beltrami equations,
Potential Anal. 53 (2020), 1467--1491.

\vspace{-0.3cm}

\bibitem{tyy19}
J. Tao, Da. Yang and Do. Yang,
Boundedness and compactness characterizations of Cauchy integral
commutators on Morrey spaces,
Math. Methods Appl. Sci. 42 (2019), 1631--1651.

\vspace{-0.3cm}

\bibitem{tyy21}
J. Tao, D. Yang and W. Yuan,
A bridge connecting Lebesgue and Morrey spaces via Riesz norms,
Banach J. Math. Anal. 15 (2021), no. 1, 20.
	
\vspace{-0.3cm}



\bibitem{wsy10}
W. Yuan, W. Sickel and D. Yang,
Morrey and Campanato Meet Besov, Lizorkin and Triebel,
Lecture Notes in Mathematics 2005,
Springer-Verlag, Berlin, 2010.

\end{thebibliography}
\end{document}